\theoremstyle{theorem}
\newtheorem{thm}{Theorem}[section]
\newtheorem{theoremalpha}{Theorem}
\newtheorem{lem}[thm]{Lemma}
\newtheorem{prop}[thm]{Proposition}
\newtheorem{cor}[thm]{Corollary}
\theoremstyle{definition} 
\newtheorem{rmk}[thm]{Remark}
\theoremstyle{remark} 
\renewcommand{\xto}{\xrightarrow}
\newcommand{\C}{\mathbb{C}}
\newcommand{\calC}{\mathcal{C}}
\newcommand{\Z}{\mathbb{Z}}
\newcommand{\Q}{\mathbb{Q}}
\newcommand{\R}{\mathbb{R}}
\newcommand{\Hyp}{\mathbb{H}}
\newcommand{\beq}{\begin{equation*}}
\newcommand{\eeq}{\end{equation*}}
\newcommand{\GL}{\mathrm{GL}}
\DeclareMathOperator{\Diff}{Diff}
\DeclareMathOperator{\Map}{Map}
\DeclareMathOperator{\Hom}{Hom}
\DeclareMathOperator{\Aut}{Aut}
\DeclareMathOperator{\id}{id}
\DeclareMathOperator{\Homeo}{Homeo}
\DeclareMathOperator{\SO}{SO}
\newcommand{\dmo}{\DeclareMathOperator}
\newcommand{\Om}{\Omega}
\newcommand{\wtil}{\widetilde}
\newcommand{\bb}[1]{\mathbb{#1}}
\dmo{\sgn}{sign}\dmo{\Span}{span}
\dmo{\we}{\wedge}
\dmo{\ind}{ind}\dmo{\Ind}{Ind}
\dmo{\bop}{\bigoplus}\dmo{\pic}{Pic}
\dmo{\vol}{Vol}\dmo{\gal}{Gal}\dmo{\perm}{Perm}
\dmo{\tor}{Tor}\dmo{\ext}{Ext}\dmo{\Ext}{Ext}
\dmo{\aut}{aut}
\dmo{\inn}{Inn}\dmo{\var}{Var}
\dmo{\ad}{ad}\dmo{\curl}{curl}
\dmo{\hy}{\bb H}\dmo{\Sl}{SL}
\dmo{\psl}{PSL}
\dmo{\iso}{iso}
\dmo{\conf}{Conf}
\dmo{\stab}{Stab}\dmo{\Jac}{Jac }
\dmo{\diam}{diam}\dmo{\fix}{Fixed}\dmo{\Fix}{Fix}
\dmo{\injR}{injRad}\dmo{\Ad}{Ad}
\dmo{\esv}{ess-vol}
\dmo{\nil}{Nil}\dmo{\sol}{Sol}
\dmo{\Div}{div}
\dmo{\SU}{SU}
\dmo{\rk}{rk}
\dmo{\rank}{rank}
\dmo{\psp}{PSp}\dmo{\psu}{PSU}
\dmo{\PU}{PU}\dmo{\pgl}{PGL}
\dmo{\Mod}{Mod}\dmo{\range}{Range}
\dmo{\eu}{eu}\dmo{\mi}{mi}
\dmo{\Log}{Log}\dmo{\supp}{supp}
\dmo{\maps}{Maps}\dmo{\Gr}{Gr}
\dmo{\Pin}{Pin}
\dmo{\Spin}{Spin}\dmo{\Str}{Str}
\dmo{\Sq}{Sq}\dmo{\Symp}{Symp}
\dmo{\pd}{PD}\dmo{\PD}{PD}\dmo{\sig}{Sig}
\dmo{\ev}{ev}\dmo{\St}{St}
\dmo{\Pt}{Pt}\dmo{\pt}{pt}
\dmo{\Pl}{PL}
\dmo{\String}{String}\dmo{\smear}{smear}
\dmo{\dev}{dev}
\dmo{\met}{Met}\dmo{\contact}{Contact}
\dmo{\teich}{Teich}\dmo{\Teich}{Teich}\dmo{\qi}{QI}
\dmo{\der}{Der}
\dmo{\cl}{Cliff}\dmo{\Cl}{Cl}
\dmo{\Pf}{Pf}
\dmo{\ch}{ch}\dmo{\diag}{diag}
\dmo{\grad}{grad}\dmo{\Char}{char}
\dmo{\spec}{Spec}\dmo{\Arg}{Arg}
\dmo{\gl}{GL}
\dmo{\sym}{Sym}\dmo{\Sym}{Sym}
\dmo{\com}{Comm}
\dmo{\Lk}{Lk}
\dmo{\CAT}{CAT}
\dmo{\Rep}{Rep}
\dmo{\Res}{Res}
\dmo{\Conf}{Conf}
\dmo{\PConf}{PConf}
\dmo{\Push}{Push}
\dmo{\Cont}{Cont}
\dmo{\sm}{\setminus}
\dmo{\vn}{\varnothing}
\dmo{\disk}{\mathbb D}
\dmo{\Trd}{Trd}\dmo{\Mat}{Mat}
\dmo{\Riem}{Riem}
\dmo{\Diffn}{\Diff_0}\dmo{\diff}{diff}
\dmo{\homeo}{Homeo}
\dmo{\Ham}{Ham}\dmo{\Met}{Met}
\dmo{\Ein}{Ein}\dmo{\CP}{\co P}
\dmo{\Per}{Per}\dmo{\Ric}{Ric}
\dmo{\Nrd}{Nrd}
\dmo{\Comp}{Comp}\dmo{\PSC}{PSC}
\dmo{\Cent}{Cent}\dmo{\Orb}{Orb}
\dmo{\aind}{a-ind}\dmo{\tind}{t-ind}
\dmo{\constant}{constant}
\dmo{\Td}{Td}
\dmo{\LMod}{LMod}
\dmo{\SMod}{SMod}
\dmo{\SDiff}{SDiff}
\dmo{\Br}{Br}
\dmo{\csch}{csch}
\dmo{\triv}{triv}
\dmo{\genus}{genus}
\dmo{\Homeq}{HomEq}
\dmo{\PP}{\mathbb{P}}
\dmo{\U}{U}
\dmo{\Gal}{Gal}
\dmo{\BDiff}{\wtil{\Diff}}
\dmo{\BAut}{\wtil{\Aut}}
\dmo{\Iso}{Iso}
\dmo{\Cone}{Cone}
\dmo{\codim}{codim}
\dmo{\II}{II}
\dmo{\I}{I}
\dmo{\InjRad}{InjRad}
\dmo{\Inn}{Inn}
\dmo{\sys}{sys}
\dmo{\Comm}{Comm}
\dmo{\PO}{PO}
\dmo{\vertex}{Vert}
\dmo{\POm}{P\Om}
\dmo{\ab}{ab}
\dmo{\PSO}{PSO}
\dmo{\CRS}{CRS}
\dmo{\Diffext}{Diffext}
\dmo{\Diffextad}{Diffextad}
\dmo{\Diffstand}{Diffstand}
\newcommand{\PDiff}{\mathrm{PDiff}}
\begin{document}
\title{The Euler class of infinite-type surface bundles}
\author{Mauricio Bustamante, Rita Jim\'enez Rolland, and  Israel Morales}

\begin{abstract}
We study the Euler class of smooth orientable infinite-type surface bundles with a section. For many such surfaces, we show that this cohomology class
is nontrivial, and that the behavior of its powers depends on the genus and the type of ends. As an application, we extend Morita's non-lifting theorem to many infinite-type surfaces, including surfaces of infinite genus.
\end{abstract}
\maketitle
\section{Introduction}
A fundamental problem in geometric topology is to classify all smooth fiber bundles with fiber a smooth manifold $S$. It is a well-known fact that isomorphism classes of such bundles over a fixed $CW$-complex $X$ are in bijective correspondence with homotopy classes $[X,B\Diff(S)]$ of continuous maps $X\to B\Diff(S)$, where $B\Diff(S)$ is the classifying space of the topological group $\Diff(S)$ of diffeomorphisms of $S$. In particular, the cohomology ring $H^*(B\Diff(S))$ is the ring of \textit{characteristic classes for smooth $S$-bundles}, which are among the most basic invariants one can assign to a fiber bundle.

The purpose of this paper is to study these characteristic classes for smooth fiber bundles whose fiber is a smooth oriented surface of \textit{infinite type}, namely a $2$-dimensional real smooth manifold whose fundamental group is not finitely generated.

We restrict our attention to oriented bundles equipped with a section, so as to incorporate more of the geometry of $S$. Equivalently, this amounts to studying the cohomology of $B\Diff_\ast(S)$, the classifying space of the group of orientation-preserving diffeomorphisms of $S$ that fix a point $\ast \in S$.
 
Given such a fixed point, we can assign to each diffeomorphism fixing $\ast$ its derivative at that point, which induces a map between classifying spaces
\beq
d:B\Diff_\ast(S)\to B\GL_2^+(\R),
\eeq
where $B\GL_2^+(\R)$ is the classifying space for real oriented vector bundles of rank 2.
The cohomology ring of $B\GL_2^+(\R)$ is a polynomial ring in the \emph{universal Euler class} $E\in H^2(B\GL_2^+(\R);\Z)$. 
Pulling back $E$ along $d$ yields
\beq
e:=d^*E\in H^2(B\Diff_\ast(S);\Z),
\eeq
which is a characteristic class for smooth $S$-bundles with a section. 
In this paper we study properties of the \emph{Euler class} $e$ for various infinite-type surface bundles.

To state our main results, it is useful to consider surfaces with more marked points. Let $\PDiff^k(S)$ denote the group of orientation-preserving diffeomorphisms of the surface $S$ which fix $k$ distinct \emph{marked points} $p_1,\ldots,p_k$ in $S$. Each point $p_i$ gives rise to an Euler class $e_i\in H^2(B\PDiff^k(S);\Z)$.
\begin{theoremalpha}\label{thm:pure-infinite}
For all oriented smooth surfaces $S$ of infinite genus, there is an injective ring homomorphism
\beq
\Q[e_1,\ldots,e_k]\hookrightarrow H^*(B\PDiff^k(S);\Q).
\eeq
\end{theoremalpha}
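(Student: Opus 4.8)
\emph{The plan} is to exhaust $S$ by compact subsurfaces of growing genus, push diffeomorphism groups forward by extension by the identity, and then use Harer's homological stability together with the Madsen--Weiss theorem (in its form for surfaces with marked points) to detect arbitrary powers of the $e_i$ in a range of cohomological degrees that tends to infinity. The end space of $S$ will play no role; only the infinitude of the genus is used, and this is exactly why the conclusion is uniform over all infinite-genus $S$.

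Since $S$ has infinite genus it is noncompact, so I would write $S=\bigcup_{j\ge1}S_j$ as an increasing union of compact connected subsurfaces with $\partial S_j\neq\varnothing$, each containing $p_1,\dots,p_k$ in its interior and with $\genus(S_j)=g_j\to\infty$. Extension by the identity on $S\setminus S_j$ gives continuous homomorphisms $\Diff(S_j\rel\partial;p_1,\dots,p_k)\to\PDiff^k(S)$, hence maps $\iota_j\colon B\Diff(S_j\rel\partial;p_1,\dots,p_k)\to B\PDiff^k(S)$, and since the derivative at $p_i$ is unchanged by such an extension one has $\iota_j^{*}e_i=e_i$. It therefore suffices to show that for every $d$ there is a $j$ such that the composite
\beq
\Q[e_1,\dots,e_k]\longrightarrow H^{*}(B\PDiff^k(S);\Q)\xrightarrow{\ \iota_j^{*}\ }H^{*}\big(B\Diff(S_j\rel\partial;p_1,\dots,p_k);\Q\big)
\eeq
is injective in degrees $\le d$: then no nonzero polynomial in the $e_i$ can map to zero in $H^{*}(B\PDiff^k(S);\Q)$, i.e.\ the displayed ring map is injective.

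For $g_j$ large we have $\chi(S_j)<0$, so $\Diff_0(S_j\rel\partial;p_1,\dots,p_k)$ is contractible (Earle--Eells for the rel-boundary diffeomorphism group of a hyperbolic surface, together with asphericity of surface configuration spaces), and hence $B\Diff(S_j\rel\partial;p_1,\dots,p_k)\simeq B\Gamma_{g_j,b_j}^{k}$, the classifying space of the mapping class group fixing $\partial S_j$ pointwise and the $k$ marked points. I would then invoke Harer's homological stability in the presence of marked points and boundary, together with the Madsen--Weiss theorem and its extension to surfaces with marked points, to produce a constant $c>0$ for which, in degrees $*\le c\,g_j$, the rational cohomology of $B\Gamma_{g_j,b_j}^{k}$ is the free graded-commutative algebra on the Mumford--Morita--Miller classes $\kappa_1,\kappa_2,\dots$ and the classes $e_1,\dots,e_k$, with $e_i$ equal to the pullback of the universal Euler class along the derivative at $p_i$. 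In that range $\Q[e_1,\dots,e_k]$ injects into $H^{*}(B\Gamma_{g_j,b_j}^{k};\Q)$; so given $0\neq x$ of degree $d$, picking $j$ with $c\,g_j\ge d$ shows that the image of $x$ in $H^{*}(B\Gamma_{g_j,b_j}^{k};\Q)$, and hence (by the previous step) in $H^{*}(B\PDiff^k(S);\Q)$, is nonzero. This would complete the proof.

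\emph{The hard part} is the use of Madsen--Weiss with marked points: one must quote it in a form that genuinely handles marked points and verify that the new polynomial generators contributed by a marked point are exactly the Euler classes $e_i$ defined via the derivative map $d$ of the Introduction --- and not the classes attached to a \emph{framed} marked point, along which the derivative is trivialized and $e_i$ would vanish. Granting that identification, the algebraic independence of $e_1,\dots,e_k$ in an unbounded range of degrees is automatic, the contractibility statement and the colimit bookkeeping are routine, and infinite genus is used only to make the stable range $c\,g_j$ unbounded.
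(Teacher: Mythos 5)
Your proposal is correct in outline and rests on the same basic mechanism as the paper's argument: infinite genus supplies compact subsurfaces of arbitrarily large genus, finite-type results detect the classes there, and compatibility of the Euler classes with extension by the identity transports nonvanishing back to $H^*(B\PDiff^k(S);\Q)$. The implementation, however, differs. The paper embeds a \emph{disjoint union} of $k$ one-boundary subsurfaces $S_{g_1,1}\sqcup\cdots\sqcup S_{g_k,1}$, one around each marked point, and then invokes the K\"unneth formula: distinct monomials land in distinct multidegree summands of $\bigotimes_j H^*(B\Diff_{p_j}(S_{g_j,1});\Q)$, so the only finite-type input needed is that each power $\varepsilon_j^{i_j}$ is rationally nonzero once $g_j$ is large, which is Morita's Theorem 7.5. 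You instead take a single exhausting subsurface containing all $k$ marked points and quote Harer stability together with Madsen--Weiss with marked points to obtain the full stable polynomial algebra on the $\kappa$-classes and $e_1,\dots,e_k$ in a range growing with the genus. That works, but it is heavier than necessary: Morita's Theorem 7.5 (with Boldsen's improved range, exactly as the paper uses for Theorem B) already gives the joint algebraic independence of $e_1,\dots,e_k$ on a single large-genus surface in an unbounded range, so Madsen--Weiss can be avoided; and the identification issue you rightly flag --- that the degree-two generators attached to marked points in the stable computation are the derivative-defined Euler classes rather than classes at \emph{framed} marked points (which would vanish) --- simply does not arise in the paper's route, since there the classes on the compact pieces are defined by the same derivative construction and their compatibility with the restriction maps is immediate. (Also, for your intermediate step the relevant contractibility of components of the rel-boundary, marked-point diffeomorphism group is Earle--Schatz/Gramain and needs no negative Euler characteristic hypothesis; this is a cosmetic point.) In short: your route buys the full stable picture with single-subsurface bookkeeping, while the paper's route buys a lighter input and a cleaner treatment of several marked points via K\"unneth.
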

For infinite-type surfaces $S=Y_g$ of finite genus $g$, we obtain an analogous result which, in addition to the Euler classes, involves characteristic classes $\nu_i$ in degree $2i$ arising from the MMM-classes of $S_g$-bundles (with $S_g$ the compact orientable genus $g$ surface), obtained by fiberwise Freudenthal compactification of $Y_g$-smooth bundles.
\begin{theoremalpha}\label{thm:pure-finite}
For all oriented infinite-type smooth surfaces $Y_g$ of genus $g\geq 2$, and for all $k\geq 0$, there is a ring homomorphism
\beq
\Q[\nu_1,\ldots,\nu_{g-2},e_1,\ldots,e_k]\rightarrow H^*(B\PDiff^k(Y_g);\Q),
\eeq
which is injective up to degree $\lfloor \frac{2g-2}{3}\rfloor$.
\end{theoremalpha}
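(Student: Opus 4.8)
The plan is to build the homomorphism by \emph{fiberwise Freudenthal (end) compactification} and then to extract injectivity from the known cohomology of moduli spaces of finite-type surfaces. Since $Y_g$ has finite genus $g$, I would first realize it as $Y_g=S_g\setminus C$ for a tame closed subset $C\subset S_g$ homeomorphic to the end space of $Y_g$; then the Freudenthal compactification $\widehat{Y_g}$ is homeomorphic to $S_g$. Every orientation-preserving diffeomorphism of $Y_g$ fixing the marked points $p_1,\dots ,p_k$ is proper, hence extends canonically to a homeomorphism of $\widehat{Y_g}\cong S_g$ still fixing $p_1,\dots ,p_k$. This should give a continuous homomorphism $\PDiff^k(Y_g)\to\Homeo^k(S_g)$ and, after composing with a homotopy inverse of the classical equivalence $B\Diff^k(S_g)\xrightarrow{\ \simeq\ }B\Homeo^k(S_g)$, a map
\beq
\bar\phi\colon B\PDiff^k(Y_g)\longrightarrow B\Diff^k(S_g)
\eeq
classifying the fiberwise Freudenthal compactification of the universal $Y_g$-bundle. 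I would then set $\nu_i:=\bar\phi^*\kappa_i$ with $\kappa_1,\kappa_2,\dots$ the MMM-classes; since the compactification leaves a neighbourhood of each $p_i$ untouched, one also gets $\bar\phi^*e_i=e_i$. The assignments $\nu_i\mapsto\nu_i$, $e_i\mapsto e_i$ then define the required ring homomorphism $\Phi\colon\Q[\nu_1,\dots ,\nu_{g-2},e_1,\dots ,e_k]\to H^*(B\PDiff^k(Y_g);\Q)$, and the existence of $\Phi$ is automatic once these classes are in place.

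To get injectivity up to degree $d:=\lfloor\tfrac{2g-2}{3}\rfloor$, the idea is to construct a partial left inverse at the level of classifying spaces. Decompose $Y_g=S_{g,1}\cup_\partial W$ with $S_{g,1}$ a compact genus-$g$ subsurface with one boundary circle carrying the $k$ marked points, and $W$ a \emph{fixed} genus-$0$ surface with one boundary circle and (infinitely many) ends. Extending diffeomorphisms by the identity on $W$ yields a map $\iota\colon B\PDiff^k(S_{g,1},\partial)\to B\PDiff^k(Y_g)$, where $\PDiff^k(S_{g,1},\partial)$ denotes the orientation-preserving diffeomorphisms fixing $\partial S_{g,1}$ pointwise and the $k$ marked points. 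Because fiberwise Freudenthal compactification turns the fixed piece $W$ into a disk, the composite $\bar\phi\circ\iota$ is homotopic to the standard ``cap off the boundary with a disk'' map, so $\iota^*\Phi$ is the canonical homomorphism
\beq
\Q[\kappa_1,\dots ,\kappa_{g-2},e_1,\dots ,e_k]\longrightarrow H^*(B\PDiff^k(S_{g,1},\partial);\Q)
\eeq
sending each generator to the class of the same name. Since $\ker\Phi\subseteq\ker(\iota^*\Phi)$, it is enough to prove that this last map is injective up to degree $d$.

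This last point is where I would invoke finite-type machinery. By homological stability for mapping class groups of surfaces with one boundary component and marked points (Harer, Ivanov, Boldsen, Randal-Williams--Wahl), $H^*(B\PDiff^k(S_{g,1},\partial);\Q)$ agrees up to degree $d$ with the corresponding stable cohomology, and by the Madsen--Weiss theorem together with its variant allowing marked points (B\"odigheimer--Tillmann), that stable cohomology is the polynomial algebra $\Q[\kappa_1,\kappa_2,\dots ,e_1,\dots ,e_k]$ on the MMM-classes and the marked-point Euler classes. Under these identifications the canonical homomorphism above is, in each degree $\le d$, the restriction of the inclusion of the sub-polynomial-algebra generated by $\kappa_1,\dots ,\kappa_{g-2}$ and $e_1,\dots ,e_k$ into the full polynomial algebra, hence injective; pulling back along $\iota$ then gives the theorem.

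I expect the main obstacle to be the first paragraph: verifying that the end space of $Y_g$ has a tame model inside $S_g$ so that $\widehat{Y_g}\cong S_g$, and --- more importantly --- that the extension-of-proper-maps assignment is continuous, so that it genuinely defines a map $B\PDiff^k(Y_g)\to B\Homeo^k(S_g)$ (hence that the fiberwise compactification is a locally trivial $S_g$-bundle and that $\bar\phi^*$ really carries $\kappa_i$ and $e_i$ to the classes claimed). The remaining inputs --- the sharp slope-$\tfrac23$ stability range, which must match $\lfloor\tfrac{2g-2}{3}\rfloor$ exactly, and the structure of the stable cohomology with marked points --- are standard.
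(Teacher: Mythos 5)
Your proposal is correct and follows essentially the same route as the paper: define $\nu_i$ by pulling back MMM classes along the fiberwise Freudenthal compactification (the paper works directly with the continuous extension homomorphism $\kappa\colon\PDiff^k(Y_g)\to\mathrm{PHomeo}^k(S_g)$ from Section \ref{sec:extension}, sidestepping your worry about a homotopy inverse of $B\Diff\to B\Homeo$ and the fact that extensions need not be smooth), then restrict along the extension-by-identity map from $\PDiff^k_\partial(S_{g,1})$, identify the composite with the capping map, and conclude from the finite-type algebraic independence in degrees $\le\lfloor(2g-2)/3\rfloor$. The only cosmetic difference is your final input (homological stability plus Madsen--Weiss with marked points) where the paper cites Morita's theorem with Boldsen's improved range, which is the same statement.
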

The definition of the classes $\nu_i$ and the proof of the theorem will be given in the Section \ref{sec:MMMPure}.

Specializing to other families of surfaces of infinite type, we obtain the following results.

\begin{theoremalpha}\label{thm:family1}
There is an uncountable family of non-diffeomorphic infinite-type surfaces of genus zero, including the Cantor tree surface, whose Euler class and all its powers have infinite order.
\end{theoremalpha}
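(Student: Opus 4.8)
The plan is to restrict the Euler class to a concrete subgroup of $\Diff_\ast(S)$ supported near the marked point, and to reduce the statement to the rational acyclicity of a big mapping class group, which will in turn follow from self-similarity of the end space. Take first $S$ to be the Cantor tree surface, fix a closed disk $D\subset S$ with $\ast\in\Int D$, and set $W:=S\setminus\Int D$: a genus-zero surface with one boundary circle $\partial W=\partial D$ and a Cantor set of ends. Let $G:=\{f\in\Diff_\ast(S):f(D)=D\}$; since the inclusion $BG\to B\Diff_\ast(S)$ pulls $e$ back to $e$, it suffices to prove $e^n\neq 0$ in $H^{2n}(BG;\Q)$ for all $n$. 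Write $\Diff(W;\partial W)$ for diffeomorphisms of $W$ preserving $\partial W$ setwise, $\Diff(D;\ast,\partial D)$ for those of $D$ fixing $\ast$ and preserving $\partial D$ setwise, and $\Diff(D\rel\partial D,\ast)$ for those of $D$ fixing $\partial D$ pointwise and $\ast$. Restriction to $W$ is a group homomorphism $G\to\Diff(W;\partial W)$ which is surjective with fibre $\Diff(D\rel\partial D,\ast)$, and the latter is contractible by Smale's theorem together with parametrised isotopy extension; hence $G\xrightarrow{\ \sim\ }\Diff(W;\partial W)$.

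On the other hand $e$ depends only on the $1$-jet at $\ast$, so its restriction to $G$ factors through $\Diff(D;\ast,\partial D)$, on which the derivative at $\ast$ induces an equivalence $\Diff(D;\ast,\partial D)\simeq\Diff_+(S^1)\simeq\SO(2)$ carrying $e$ to a generator of $H^2(B\SO(2);\Z)$ — rotations give a section of restriction to the boundary, and the derivative of a rotation is that rotation. Comparing the two restriction maps above after further restricting to $\partial D=\partial W$, one finds that under $BG\simeq B\Diff(W;\partial W)$ the class $e$ equals $\rho^\ast u$, where $u\in H^2(B\SO(2);\Z)$ is the generator and $\rho\colon\Diff(W;\partial W)\to\Diff_+(\partial W)\simeq\SO(2)$ is restriction to the boundary. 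So the theorem reduces to showing that $\rho^\ast\colon H^\ast(B\SO(2);\Q)\to H^\ast(BG;\Q)$ is injective.

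Now $\rho$ is a surjection of topological groups — every circle diffeomorphism extends over $W$, since $\Diff_+(S^1)$ is connected — with kernel $\Diff(W\rel\partial W)$. By the Earle--Eells--Gramain theorem and its extension to infinite-type surfaces of (necessarily) negative Euler characteristic, $\Diff_0(W\rel\partial W)$ is contractible, so the kernel is homotopy equivalent to the discrete group $\Mod(W\rel\partial W)$, and we obtain a fibration sequence
\beq
B\Mod(W\rel\partial W)\longrightarrow BG\xrightarrow{\ B\rho\ }B\SO(2),
\eeq
whose base is simply connected. Hence $\rho^\ast$ is an isomorphism on rational cohomology — in particular every power of $u=e$ survives — provided $\Mod(W\rel\partial W)$ is $\Q$-acyclic. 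The main point is then that $\Mod(W\rel\partial W)$ is acyclic. Here one uses the self-similarity of the Cantor set: writing it as a countable disjoint union of clopen Cantor sets together with a remainder and taking a disk-with-Cantor-set neighbourhood around each, one embeds a countable pairwise-disjoint family of copies of $W$ into $W$ rel boundary, with planar complement, and a single further copy providing a conjugator. This absorption property makes $\Mod(W\rel\partial W)$ a binate (equivalently, flabby) group, and binate groups are acyclic (de la Harpe--McDuff). Granting this, $B\rho$ is a rational homology equivalence, $H^\ast(B\Diff_\ast(S);\Q)$ contains $\Q[e]$, and all powers of $e$ have infinite order.

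The same argument applies to any genus-zero surface $S$ whose end space contains a clopen subset homeomorphic to the Cantor set: one replaces $D$ by a subsurface $N\supset D$ with a single boundary circle that is a neighbourhood of such a clopen Cantor set of ends, takes $G$ to consist of the diffeomorphisms fixing $\ast$, preserving $D$, and supported in $N$, and then $W:=N\setminus\Int D$ again has a Cantor set of ends, so everything above goes through verbatim. Since there are uncountably many pairwise non-diffeomorphic genus-zero surfaces with this property — for instance those with end space $\mathcal{C}\sqcup(\omega^{\alpha}+1)$ for $\alpha<\omega_1$, each determined by its end space, where $\mathcal{C}$ is the Cantor set — this yields the asserted uncountable family, which contains the Cantor tree surface. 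I expect the hard part to be the acyclicity of $\Mod(W\rel\partial W)$: exhibiting the homomorphisms and element witnessing binateness from the self-similar decomposition (uniformly over the family) and verifying the hypotheses of the acyclicity criterion, together with isolating precisely which genus-zero end spaces admit such a decomposition so as to pin down the uncountable family. A secondary technical input is the infinite-type Earle--Eells--Gramain statement that $\Diff_0(W\rel\partial W)$ is contractible.
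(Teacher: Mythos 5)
Your strategy is genuinely different from the paper's and is viable in outline, but as written it has a real gap at its central step. Everything is made to rest on the acyclicity of $\Mod(W\rel\partial W)$, which you assert via "self-similarity gives a binate structure" and explicitly defer. For the Cantor tree surface itself this input is essentially the Palmer--Wu computation already quoted in the paper's introduction (their acyclicity result for the one-holed binary tree surface rel boundary, which underlies $H^i(\Map_*^\delta(\mathrm{Gr}_{\mathfrak{B}}(\Sigma)))\cong\Z$ in even degrees), so that case could be outsourced rather than re-proved. But the uncountable family is where the gap bites: for a surface with end space $\mathcal{C}\sqcup(\omega^\alpha+1)$ you take $W=N\setminus\Int D$ with $\ast\in\Int D\subset N$, and this $W$ has \emph{two} boundary circles, so it is not the one-holed Cantor tree surface and nothing in the literature you invoke covers it; the claim that "everything goes through verbatim" is not justified. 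You would have to re-establish both the binate/absorption structure for $\Map(W\rel\partial W)$ with the second boundary circle held pointwise fixed (here the copies of $W$ you embed can use only one of the two boundary circles, the homomorphism "support everything in a smaller copy" must be defined on non-compactly-supported mapping classes, and the infinite product of conjugated copies must be made to converge) and the contractibility of $\Diff_0(W\rel\partial W)$ rel two boundary components; neither is automatic, and the first is a substantive theorem, not a routine verification. The softer steps (that $e|_{BG}$ equals $\rho^*u$ via $\Diff(D;\ast,\partial D)\simeq\SO(2)$, the fibration sequence over $B\SO(2)$, and rational nontriviality implying infinite order) are fine modulo standard local-section and smoothing-at-$\partial D$ technicalities.

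For contrast, the paper's proof avoids all of this machinery: by Lemma \ref{lem:nontrivial-euler} and Corollary \ref{coro:criterion-allorders}, it suffices to find finite cyclic subgroups of $\Diff_\ast(S)$ of unbounded order with a fixed point, and these are produced by re-gluing the binary tree surface so as to exhibit an order-$2^n$ symmetry fixing a point, the re-glued model being diffeomorphic to $\mathrm{Gr}_{\mathfrak{B}}(\Sigma)$ by the classification of surfaces; the uncountable family is then the genus-zero binary tree surfaces themselves. Your approach, if the acyclicity lemma were established uniformly over your family, would actually prove more (that $\rho^*$ is a rational cohomology isomorphism onto a polynomial algebra on $e$ for the subgroup $G$), but to count as a proof of Theorem \ref{thm:family1} you must either restrict the family to surfaces where the needed acyclicity is known, or supply the binate argument for the two-holed $W$ in detail.
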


\begin{theoremalpha}\label{thm:family2}
Let $X_g=S_g\setminus\mathcal{C}$, where $S_g$ is a closed oriented surface of genus $g\geq 1$ and $\mathcal{C}$ is a Cantor set embedded in $S_g$. The cohomology class
$e^n\in H^{2n}(B\Diff_\ast(X_g);\Z)$ is nontrivial for all $n\geq 1$. Moreover, 
for $n\geq g\geq 1$ the class $e^n$ is torsion of order at most the order of the $n$-th power of the Euler class in $H^{2n}(B\Diff_\ast(S_g);\Z)$.
\end{theoremalpha}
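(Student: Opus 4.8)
The plan is to prove the two assertions by essentially independent arguments. For the first (that every power $e^n$ is nonzero), I would restrict to a suitable finite subgroup of $\Diff_\ast(X_g)$ on which all powers of $e$ are visibly nonzero. For the second (that $e^n$ is torsion for $n\ge g$, with the stated order bound), I would build a comparison map $B\Diff_\ast(X_g)\to B\Diff_\ast(S_g)$ pulling $e$ back to $e$, and combine it with the (known) vanishing of high powers of $e$ on $B\Diff_\ast(S_g)$.

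\emph{Nontriviality.} Fix $g\ge1$ and an orientation-preserving involution $\sigma$ of $S_g$ with isolated fixed points (for instance a hyperelliptic involution), and let $\ast$ be one of its fixed points, so that $d\sigma_\ast=-\mathrm{id}$ on $T_\ast S_g$. Choose a $\sigma$-invariant Cantor set $\mathcal{C}\subset S_g\setminus\Fix(\sigma)$ with $\ast\notin\mathcal{C}$ — e.g.\ $\mathcal{C}=C_0\sqcup\sigma(C_0)$ for a standard Cantor set $C_0$ in a small disk disjoint from its $\sigma$-image. By the classification of infinite-type surfaces ($S_g\setminus\mathcal{C}$ has genus $g$, Cantor end space, and no ends accumulated by genus) we get $S_g\setminus\mathcal{C}\cong X_g$, and $\sigma$ restricts to an order-two element of $\Diff_\ast(X_g)$. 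The resulting map $B\Z/2=\R P^\infty\to B\Diff_\ast(X_g)$ pulls $e$ back to the Euler class of the real rank-two representation $-\mathrm{id}$ of $\Z/2$ on $\R^2$, i.e.\ to $e(\gamma\oplus\gamma)$ for the tautological line bundle $\gamma$, which is the generator of $H^2(\R P^\infty;\Z)\cong\Z/2$ since it reduces mod $2$ to $w_1(\gamma)^2\neq0$. Its $n$-th power reduces mod $2$ to $w_1(\gamma)^{2n}\neq0$, hence is the generator of $H^{2n}(\R P^\infty;\Z)\cong\Z/2$. Therefore $e^n\neq0$ in $H^{2n}(B\Diff_\ast(X_g);\Z)$ for every $n\ge1$.

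\emph{Torsion bound.} First I would produce the comparison map. Since diffeomorphisms of $X_g$ are proper, restriction identifies $\Homeo_\ast(X_g)$ (orientation-preserving homeomorphisms of $X_g$ fixing $\ast$) with the group of orientation-preserving homeomorphisms of $S_g$ that fix $\ast$ and preserve $\mathcal{C}$ setwise, $S_g$ being recovered as the end-compactification of $X_g$. Composing $\Diff_\ast(X_g)\hookrightarrow\Homeo_\ast(X_g)\cong\{\text{homeos of }S_g\text{ fixing }\ast,\text{ preserving }\mathcal{C}\}\hookrightarrow\Homeo_\ast(S_g)$ yields a map $B\Diff_\ast(X_g)\to B\Homeo_\ast(S_g)$, and since near $\ast$ everything agrees with the original diffeomorphism, this map pulls the topological Euler class of $B\Homeo_\ast(S_g)$ (defined via the tangent microbundle at $\ast$ and $B\Top^+(2)\simeq B\GL_2^+(\R)$) back to $e$. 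By two-dimensional smoothing theory ($\Top(2)\simeq\OO(2)$, so $\Top(2)/\OO(2)$ is contractible) the natural map $B\Diff_\ast(S_g)\to B\Homeo_\ast(S_g)$ is a weak equivalence taking the smooth Euler class to the topological one; composing gives a map $f\colon B\Diff_\ast(X_g)\to B\Diff_\ast(S_g)$ in the homotopy category with $f^\ast e=e$. Now $e$ is, up to sign, the $\psi$-class $\psi_1\in H^2(\mathcal{M}_{g,1};\Q)\cong H^2(B\Diff_\ast(S_g);\Q)$, and by Looijenga's theorem the tautological ring $R^k(\mathcal{M}_{g,1})$ vanishes for $k\ge g$; hence for $n\ge g$ the class $e^n=\pm\psi_1^{\,n}$ vanishes in $H^{2n}(B\Diff_\ast(S_g);\Q)$, so it is torsion of some finite order $d_n$. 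Finally $d_n\cdot e^n=f^\ast(d_n\,e^n)=0$ in $H^{2n}(B\Diff_\ast(X_g);\Z)$, so $e^n$ is torsion of order dividing $d_n=\mathrm{ord}\big(e^n\in H^{2n}(B\Diff_\ast(S_g);\Z)\big)$, which is precisely the claimed bound.

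The step I expect to be the main obstacle is the construction and analysis of the comparison map $f$: one must pass from smooth $X_g$-bundles with a section to topological $S_g$-bundles with a section by fibrewise end-compactification, identify those with smooth $S_g$-bundles through the weak equivalence $B\Diff_\ast(S_g)\simeq B\Homeo_\ast(S_g)$, and verify throughout that the three Euler classes involved (smooth on $X_g$, topological on $S_g$, smooth on $S_g$) correspond under these identifications. The other non-elementary ingredient is the rational vanishing of $e^n$ on $B\Diff_\ast(S_g)$ for $n\ge g$, which I would take from Looijenga's theorem on the tautological ring of $\mathcal{M}_{g,1}$; by contrast the finite-subgroup argument is elementary, and it is exactly what keeps $e^n$ nonzero even in the range $n\ge g$ where it is forced to be torsion.
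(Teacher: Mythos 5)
Your proposal is correct, and its two halves line up differently with the paper. The nontriviality argument is essentially the paper's: the paper proves a general lemma (any finite cyclic subgroup $C_m\le\Diff_\ast(S)$ forces $e^n\neq 0$ for all $n$, via Epstein's theorem to make the action isometric and the computation $H^*(BC_m)\cong\Z[x]/\langle mx\rangle$), and then produces exactly your $C_2$: the hyperelliptic involution with a symmetrically removed Cantor set near a free orbit; your direct $\R P^\infty$/Stiefel--Whitney computation is just the $m=2$ case, and your appeal to the classification of surfaces to identify the symmetric model with an arbitrary $X_g$ matches the paper's implicit step. The torsion bound is where you take a genuinely different route. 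The paper compares $e$ on $B\Diff_\ast(X_g)$ with the Euler class of the Nielsen action of $\Homeo_\ast(S_g)\simeq\widehat{X_g}$ on the circle at infinity (its Proposition \ref{prop:euler-freudenthal}, proved by a blow-up/fiber-homotopy-equivalence argument in hyperbolic geometry), and then quotes the Jekel--Jim\'enez Rolland theorem for finiteness of the order of $e^n$ on the closed surface; you instead run the comparison through the germ at the marked point (vertical microbundle), using $\Top(2)\simeq\OO(2)$ and the weak equivalence $B\Diff_\ast(S_g)\simeq B\Homeo_\ast(S_g)$, and you get finiteness of the order from Looijenga's vanishing of $\psi_1^n$ in $R^*(\mathcal{M}_{g,1})$ for $n\ge g$ together with finite generation of $H^*(B\Diff_\ast(S_g);\Z)$. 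Your mechanism avoids hyperbolic geometry and the circle action entirely, at the price of the smoothing-theoretic identifications you correctly flag as the main work (in the paper this burden sits in Proposition \ref{prop:euler-freudenthal} and in the continuity of the end-extension homomorphism $\kappa$, cf.\ the cited Palmer--Wu identification $\Homeo(X_g)\cong\Homeo(S_g,\mathcal{C})$). Two small points to tighten: justify $d\sigma_\ast=-\mathrm{id}$ (a nontrivial smooth involution fixing $\ast$ is an isometry of an averaged metric, so its derivative is an orientation-preserving linear involution $\neq\mathrm{id}$), and check the low-genus cases $g=1,2$ when you invoke Looijenga (for $g=1$ use $H^2(\SL_2(\Z);\Q)=0$ directly), or simply cite the same torsion result the paper uses, which gives finiteness for all $g\ge 1$.
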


The Euler class can be used in questions about the existence of sections of the natural map
\beq
\Diff_\ast(S)\longrightarrow \Map_\ast(S),
\eeq 
where $\Map_{\ast}(S)$ denotes the \textit{mapping class group of $S$ relative to $\ast$}, which we define here as the group of path-components $\pi_0\Diff_\ast(S)$ of the topological group of orientation and base-point-preserving diffeomorphisms of $S$.

Morita noticed that the tautological class associated to the Euler class gives an obstruction to the existence of such sections. We obtain the following variant of \cite[Theorem 8.1]{morita}; see  Theorem \ref{thm:nonrealizability} for a more precise statement.
\begin{theoremalpha}\label{thm:Nielsen}
For all oriented smooth infinite-type surfaces of genus $g\in [13,\infty]$, and for an uncountable family of non-diffeomorphic genus zero surfaces, the map $\Diff_\ast(S)\longrightarrow \Map_\ast(S)$ admits no section.
\end{theoremalpha}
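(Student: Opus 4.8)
\textbf{Proof strategy for Theorem~\ref{thm:Nielsen}.}

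The plan is to deduce every case from the quantitative form of Morita's obstruction which we prove as Theorem~\ref{thm:nonrealizability}, fed by the nonvanishing statements of Theorems~\ref{thm:pure-infinite}, \ref{thm:pure-finite} and \ref{thm:family1}. We first record that for every infinite-type surface $S$ the identity component $\Diff_0(S)$ is contractible, so $B\Diff_\ast(S)$ is a $K(\Map_\ast(S),1)$ and each class $e_i$ is a group-cohomology class in $H^2(\Map_\ast(S);\Z)$. Under the resulting identification, a section $s\colon\Map_\ast(S)\to\Diff_\ast(S)$ exhibits the vertical tangent bundle along the section as a \emph{flat} oriented plane bundle, classified by the discrete representation $d\circ s\colon\Map_\ast(S)\to\GL_2^+(\R)$. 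Hence $e$ is the Euler class of a flat $\GL_2^+(\R)$-bundle, so by Milnor's inequality $|e(\xi)|\le\mathrm{genus}-1$ on every closed surface $\xi\to\Sigma$; equivalently $e$ is a \emph{bounded} class, and its Gromov seminorm is strictly smaller than the value realized by the unit tangent bundle. By submultiplicativity of the seminorm under cup products, $\|e^{\,n}\|_\infty$ decays geometrically in $n$; the same holds for all the $e_i$ simultaneously, and --- since, by the construction of Section~\ref{sec:MMMPure}, the classes $\nu_i$ are pulled back via fiberwise Freudenthal compactification from the MMM-classes of $S_g$-bundles, which satisfy the Milnor--Wood bound attached to Meyer's signature cocycle --- for every monomial in the $\nu_i$ and the $e_j$. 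This is the content we extract as Theorem~\ref{thm:nonrealizability}: \emph{a section forces every such monomial of degree $2n$ to have Gromov seminorm at most $c^{\,n}$ for an explicit constant $c<1$, hence to vanish on any homology class whose corresponding period exceeds $c^{\,n}$ times its $\ell^1$-seminorm}.

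Given this, the three cases go as follows. For $S$ of infinite genus, Theorem~\ref{thm:pure-infinite} with $k=1$ gives $\Q[e]\hookrightarrow H^*(B\Diff_\ast(S);\Q)$; the point is to read off from its proof that the nonzero class $e^{\,n}$ is carried on homology classes --- built from the surface classes used there --- whose $e^{\,n}$-periods outgrow $c^{\,n}$ times their seminorm, which contradicts the bound above. For the uncountable genus-zero family (the Cantor tree surface and its relatives), Theorem~\ref{thm:family1} plays the same role integrally: $e^{\,n}$ has infinite order and, along the same cycles, unbounded period-to-seminorm ratio. For finite genus $g\in[13,\infty)$ one uses Theorem~\ref{thm:pure-finite}: since $g\ge 13$ we have $\lfloor\frac{2g-2}{3}\rfloor\ge 8$, so the monomial of degree at most $8$ demanded by Morita's argument (a product of $\nu_i$'s, possibly multiplied by a power of some $e_j$) lies in the range where $\Q[\nu_1,\dots,\nu_{g-2},e_1,\dots,e_k]\to H^*(B\PDiff^k(Y_g);\Q)$ is injective, hence is nonzero; but a section makes the compactified $S_g$-bundle flat, forcing this monomial to obey Morita's estimate for flat surface bundles over surfaces, a contradiction. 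The value $13$ is precisely the least $g$ for which the degree required by Morita's argument fits inside $\lfloor\frac{2g-2}{3}\rfloor$.

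The main obstacle is Theorem~\ref{thm:nonrealizability} itself: passing from ``the universal bundle is flat'' to ``this characteristic number is too small.'' I expect this to live entirely in bounded cohomology --- boundedness of the Euler cocycle of flat plane bundles \cite{morita}, boundedness of Meyer's signature cocycle for the finite-genus correction, and submultiplicativity of the Gromov seminorm --- but the subtle point is that mere nonvanishing of $e^{\,n}$ is not incompatible with boundedness, so one must verify that the explicit cycles produced in the proofs of Theorems~\ref{thm:pure-infinite}--\ref{thm:family1} realize $e^{\,n}$-periods that outpace $c^{\,n}$ times their $\ell^1$-seminorm. It is worth stressing what degenerates here: for a closed surface one restricts $e$ to the point-pushing subgroup $\pi_1(S_g)$, where its period is $\chi(S_g)=2-2g$ on a cycle of seminorm at most $4g-4$, which beats Milnor's bound at one stroke; but for infinite-type $S$ the group $\pi_1(S)$ is free, so $H^2(\pi_1(S);\Z)=0$ provides no such cycle, and the needed lower bounds must instead come from the higher-genus constructions underlying Theorems~\ref{thm:pure-infinite}--\ref{thm:family1}. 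A last bookkeeping item is checking that the fiberwise Freudenthal compactification of Section~\ref{sec:MMMPure} takes flat $Y_g$-structures to flat $S_g$-structures, which is what licenses importing Morita's original estimate in the finite-genus case.
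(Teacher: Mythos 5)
Your proposal correctly identifies the inputs (Theorems \ref{thm:pure-infinite}, \ref{thm:pure-finite}, \ref{thm:family1}, with $g\geq 13$ exactly so that degree $8$ lies in the range $\lfloor\tfrac{2g-2}{3}\rfloor$), but the obstruction mechanism you propose is not the one that works, and as you yourself flag, its key step is missing. The paper does not argue via Milnor--Wood or bounded cohomology at all: a section of $\Diff_\ast(S)\to\Map_\ast(S)$ makes the maps $\pi^*$ in the square comparing $H^*(B\Map_\ast(S))$, $H^*(B\Diff_\ast(S))$, $H^*(B\Map_\ast^\delta(S))$, $H^*(B\Diff_\ast^\delta(S))$ injective, hence $B(\id)^*:H^*(B\Diff_\ast(S))\to H^*(B\Diff_\ast^\delta(S))$ is injective; over $B\Diff_\ast^\delta(S)$ the universal $S$-bundle carries a codimension-two foliation transverse to the fibers whose normal bundle is the vertical tangent bundle, so the \emph{Bott vanishing theorem} kills $p_1^2=e^4$ rationally, i.e.\ $e^4\in\ker(B(\id)^*)$, contradicting the rational nontriviality of $e^4$ supplied by Theorems \ref{thm:pure-infinite}, \ref{thm:pure-finite}, \ref{thm:family1}. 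Bott vanishing gives outright vanishing of a fixed power under the flatness hypothesis, which is a far stronger conclusion than any norm inequality and is what makes the proof close.

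The gap in your route is not cosmetic. Boundedness of $e$ (and of the classes $\nu_i$) holds \emph{unconditionally}: $e$ is pulled back from $\Homeo^+(S^1)$ via the Nielsen action (Section \ref{sec:euler-action}), whose Euler class is bounded, and the MMM classes are bounded by Morita's work; so every period of $e^n$ on every cycle already satisfies the submultiplicative upper bound $C^n\lVert z\rVert_1$ without assuming a section exists. A contradiction could only come from the \emph{sharper} constant for flat linear $\GL_2^+(\R)$-bundles (Milnor versus Wood), and then only if one exhibits cycles whose $e^n$-periods exceed that sharper bound. In the compact case this cycle is the point-pushing surface subgroup, with period $2-2g$ against Milnor's bound $g-1$; as you note, for infinite-type $S$ the group $\pi_1(S)$ is free and this cycle disappears. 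The cycles that detect $e^n$ in the proofs of Theorems \ref{thm:pure-infinite}--\ref{thm:family1} live in $B\Diff_{p_j}(S_{g_j,1})$ (or come from finite cyclic subgroups), where the Euler class is bounded and no violation of a Milnor-type bound is established or expected from those constructions; so the ``period outpaces $c^n$ times the seminorm'' statement on which your whole argument rests is unproved and there is no indication it can be proved this way, especially for higher powers $e^4$ where no sharp flat Milnor--Wood-type inequality in degree $8$ is available. In short: nonvanishing plus boundedness is not a contradiction, and your proposal supplies nothing beyond boundedness; the paper replaces this with Bott vanishing, which is also why the argument genuinely uses smoothness and fails for homeomorphism groups.
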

It would be interesting to know whether the non-lifting theorem also holds for homeomorphism groups. In this direction, Chen and He \cite[Theorem 1.3]{chen-he} show that this is the case for the Cantor tree surface.

We finish this introduction with some remarks.
\subsection{Finite-type surfaces}
Our results extend to infinite-type surfaces several theorems previously known for compact surfaces. In particular, when $S=S_g$ is compact of genus $g\geq 4$, Morita shows \cite[Theorem 7.5]{morita} that the Euler class and other tautological classes are algebraically independent up to degree $\lfloor g/3\rfloor$. By the improved stability range of Boldsen \cite[Theorems 1 \& 2]{boldsen}, this independence in fact holds up to degree $\lfloor (2g-2)/3\rfloor$. 

In addition, it is shown in work of the second-named author and Jekel \cite[Theorem A]{rita-jekel}, \cite[Theorem A]{rita-jekel2} that if $n\geq g\geq 1$, then $e^n$ generates a finite cyclic subgroup of order a multiple of $4g(2g+1)$, and the order of $e^g$ is divisible by $4g(2g+1)(2g-1)$. This can be compared to our Theorem \ref{thm:family2}.
\subsection{Cohomology of big mapping class groups}\label{sec:cohomlogy-bigmcg}
Our results can be interpreted as calculations of the cohomology of the mapping class group of infinite-type surfaces. This group is endowed with the quotient topology from the surjection $\Diff_\ast(S)\to\Map_\ast(S)$. With this topology, $\Map_*(S)$ is totally disconnected, and when $S$ has infinite type it is a Polish group. Let $\Map_*^\delta(S)$ denote the same group with the discrete topology. The map induced by the identity on classifying spaces
\beq
B\Map_*^\delta(S)\to B\Map_*(S)
\eeq
is a weak homotopy equivalence. Therefore there are canonical isomorphisms
\beq
H^*(B\Map_*(S);\Z)\cong H^*(B\Map_*^\delta(S);\Z)\cong H^*(\Map_*^\delta(S);\Z),
\eeq
where the last term is the group cohomology of $\Map_*^\delta(S)$.

Yagasaki \cite[Theorem 1.1]{yagasaki-homeo, yagasaki-diffeo} shows that the identity components of both $\Homeo(S)$ and $\Diff(S)$ are contractible, and the same holds for $\Homeo_\ast(S)$ and $\Diff_\ast(S)$. This implies that the identity components coincide with the respective path-components of the identity, giving weak homotopy equivalences
\beq
B\Diff_\ast(S)\to B\Map_*(S),\qquad B\Homeo_\ast(S)\to B\pi_0\Homeo_\ast(S).
\eeq
Smoothing theory further implies that the inclusion-induced map $B\Diff_\ast(S)\to B\Homeo_\ast(S)$ is a weak homotopy equivalence (see \cite[Lemma A.1]{palmer-wu-homology}). Therefore
\beq
H^*(B\Diff_\ast(S);\Z)\cong H^*((\pi_0\Homeo_\ast(S))^{\delta};\Z).
\eeq
In summary, our results can be viewed as computations in $$H^*((\pi_0\Homeo_\ast(S))^{\delta};\Z)\cong H^*(\Map^{\delta}_\ast(S);\Z),$$ which is often referred to in the literature as the cohomology of the \emph{big mapping class group}.
\subsection{Relation with other work} Let $\mathcal{C}$ be a Cantor set embedded in the $2$-sphere $S^2$. Calegari and Chen noticed in \cite[Appendix A]{CalegariChen2021} that $H^2(\Map_*^\delta({S}^2\setminus\mathcal{C});\mathbb{Z})$ contains an infinite cyclic subgroup, generated by the Euler class of the action on the simple circle, and Palmer and Wu \cite[Remark 1.1]{palmer-wu-homology} proved that $H^2(\Map_*^\delta({S}^2\setminus\mathcal{C});\mathbb{Z})\cong\mathbb{Z}$. 
In fact, they computed all the homology groups of the base-point-preserving mapping class group of any \emph{binary tree surface} {$\text{Gr}_{\mathfrak{B}}(\Sigma)$ associated to an oriented connected surface $\Sigma$ without boundary; see \cite[Definition 1.3]{palmer-wu-homology} and Section \ref{sec:genus zero}. This includes the surface $\text{Gr}_{\mathfrak{B}}(S^2)$ which is diffeomorphic to the \emph{Cantor tree surface} $S^2\setminus\mathcal{C}$. 
From their \cite[Corollary D]{palmer-wu-homology}  and the Universal Coefficient Theorem it follows that:
\beq
H^i(\Map_*^\delta(\text{Gr}_{\mathfrak{B}}(\Sigma));\mathbb{Z})=\begin{cases}
    \Z\text{\ if $i$ is even}\\
    0\text{\ if $i$ is odd}.\\
\end{cases}
\eeq
Our Theorem \ref{prop:binary tree surfaces} shows that, for any $n\geq 1$, the $n$-th power of the Euler class is an infinite order cohomology class in $H^{2n}(\Map_*^\delta(\text{Gr}_{\mathfrak{B}}(\Sigma));\mathbb{Z})$.

It is worth mentioning that in \cite[Section 8]{palmer-wu-JLMS} Palmer and Wu use the derivative to obtain homology classes dual to the powers of an Euler class (c.f. \cite[Section 3]{Bod-Till}) to show that if $S$ is a surface of infinite genus with a finite set of isolated ends (considered as `marked points'), then the inclusion of the subgroup of compactly supported mapping classes $\Map_c(S)$ in $\Map(S)$ induces a non-zero map on integral homology in every even degree. 
\subsection{Notation}
Throughout this paper, unless stated otherwise, surfaces are smooth, oriented and without boundary.
For such a surface $S$, we let $\Diff(S)$ denote the topological group (endowed with the compact-open $C^\infty$–topology) of orientation-preserving diffeomorphisms of $S$, and $\Homeo(S)$ denotes the topological group (with the compact-open topology) of orientation-preserving homeomorphisms of $S$. The subgroups consisting of diffeomorphisms or homeomorphisms fixing a marked point $\ast\in S$ will be denoted by $\Diff_\ast(S)$ and $\Homeo_\ast(S)$ respectively.

All cohomology groups are singular cohomology groups with integer coefficients, unless specified otherwise. 
\subsection{Organization of the paper}
In Section \ref{sec:background} we provide a minimal background on the classification of infinite-type surfaces and the Freudenthal compactification. Section \ref{sec:othereuler} presents several equivalent definitions of the Euler class.
Theorems \ref{thm:pure-infinite} and \ref{thm:pure-finite} are proved in Section \ref{sec:MMM}, where we also give an (indirect) definition of the MMM classes for finite-genus infinite-type surface bundles.
In Section \ref{sec:nontrivial} we prove Theorems \ref{thm:family1} and \ref{thm:family2}. Section \ref{sec:non-realization} illustrates an application of our results to the generalized Nielsen realization problem, Theorem \ref{thm:Nielsen}. Finally, in the Appendix \ref{sec:Nielsen-action} we explain how $\Map_\ast(S)$ acts faithfully on the circle and observe that such an action obstructs the splitting of the Birman exact sequence for many infinite-type surfaces.
\subsection{Acknowledgements}
We are grateful to N\'estor Colin, Rubén A. Hidalgo, and Eduardo Reyes for useful conversations. MB is supported by ANID Fondecyt Regular grant 1250727, and IM by ANID Fondecyt Postdoctoral grant 3240229. 
An important part of this project was achieved while RJR visited the Pontificia Universidad Católica de Chile. She thanks the university and MB for their hospitality.
\section{Background}\label{sec:background}
A surface is a 2-dimensional topological manifold without boundary.
Unless otherwise stated, we assume that all surfaces are connected, orientable, and smooth, that is they come equipped with a maximal $C^{\infty}$ atlas.  We say that a surface is of \textit{finite type} if its fundamental group is finitely generated, otherwise the surface is of \textit{infinite type}.
\subsection{Ends and classification of surfaces up to homeomorphism}
An \emph{end} of $S$ is an element of the inverse limit
\beq
\operatorname{Ends}(S)=\varprojlim_{K} \pi_0\bigl(S\setminus K\bigr),
\eeq
where the limit ranges over compact subsets $K\subset S$ ordered by inclusion.  The \emph{Freudenthal compactification} of $S$ is the union
\beq
\overline S \;=\; S \sqcup \operatorname{Ends}(S)
\eeq
equipped with the topology generated by sets of the form $U\;\sqcup\;\{e\in\operatorname{Ends}(S)\mid e<U\}$,
for $U\subset S$ open; here $e<U$ means that for some compact $K$ the component of $S\setminus K$ corresponding to $e$ is contained in $U$.  With this topology, $\operatorname{Ends}(S)$ is a compact, totally disconnected, separable space (in particular a closed subspace of the Cantor set).

An end $e\in\operatorname{Ends}(S)$ is called \emph{planar} if it has a neighborhood in $\overline{S}$ that embeds into the plane; otherwise $e$ is \emph{non-planar}.  We denote by $\operatorname{Ends}_{\mathrm{np}}(S)\subset\operatorname{Ends}(S)$ the (closed) subspace of non-planar ends.

The homeomorphism type of a connected, orientable surface without boundary is determined by its genus together with the pair of topological spaces $(\operatorname{Ends}(S),\operatorname{Ends}_{\mathrm{np}}(S))$. This classification theorem is due to Richards \cite{richards} and  Kerékjártó \cite{kerekjarto}. We only need the following weaker form of their theorem.

\begin{thm}\cite[Theorem 1]{richards}\label{thm:richards}
Let $S_1,S_2$ be connected, orientable surfaces without boundary, of genera $g_1,g_2\in\mathbb{N}\cup\{\infty\}$.  Then $S_1\cong S_2$ (homeomorphic) if and only if $g_1=g_2$ and there is a homeomorphism of pairs
\beq
\bigl(\operatorname{Ends}(S_1),\operatorname{Ends}_{\mathrm{np}}(S_1)\bigr)
\cong
\bigl(\operatorname{Ends}(S_2),\operatorname{Ends}_{\mathrm{np}}(S_2)\bigr).
\eeq
\end{thm}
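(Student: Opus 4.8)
The plan is to treat the two implications separately. The ``only if'' direction is immediate: a homeomorphism $S_1\to S_2$ carries compact sets to compact sets and hence induces a homeomorphism $\operatorname{Ends}(S_1)\to\operatorname{Ends}(S_2)$; it preserves planarity of an end, since ``admitting a neighborhood in the Freudenthal compactification that embeds in the plane'' is a topological condition; and it preserves genus, which one \emph{defines} topologically as the supremum over compact subsurfaces $K\subseteq S_i$ of the genus of $K$, the genus of a compact surface with boundary being $\tfrac12\bigl(2-\chi(K)-b(K)\bigr)$ with $b(K)$ its number of boundary circles. So the substance of the theorem is the ``if'' direction: given $g_1=g_2$ and a homeomorphism of pairs $h\colon(\operatorname{Ends}(S_1),\operatorname{Ends}_{\mathrm{np}}(S_1))\to(\operatorname{Ends}(S_2),\operatorname{Ends}_{\mathrm{np}}(S_2))$, I want to promote $h$ to a homeomorphism $S_1\to S_2$.

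First I would set up compatible exhaustions. Since each $S_i$ is connected, Hausdorff and second countable, it admits an exhaustion $K_1^i\subseteq K_2^i\subseteq\cdots$ by compact connected subsurfaces with boundary, and by enlarging the $K_n^i$ one may assume that no component of $S_i\setminus K_n^i$ is compact; then the components of $S_i\setminus\operatorname{int}(K_n^i)$ induce a clopen partition $\mathcal P_n^i$ of $\operatorname{Ends}(S_i)$, a component $C$ contributing the clopen set of ends lying in $C$. Using that $\operatorname{Ends}(S_i)$ is compact, metrizable and totally disconnected, one may further arrange that $\mathcal P_{n+1}^i$ refines $\mathcal P_n^i$ and that the partitions become arbitrarily fine, so that the combinatorial ``shape'' of the exhaustion records the end space.

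The heart of the argument is to make the two exhaustions combinatorially isomorphic via $h$. I would interleave them, replacing $\mathcal P_n^2$ by a refinement of $h(\mathcal P_n^1)$ and conversely, so that after reindexing the components of $S_1\setminus\operatorname{int}(K_n^1)$ correspond bijectively, under $h$, to those of $S_2\setminus\operatorname{int}(K_n^2)$, matching planar with planar. Then I would normalize the blocks $B_n^i:=K_{n+1}^i\setminus\operatorname{int}(K_n^i)$: each is a compact surface with boundary, hence determined by its genus and number of boundary circles, and by further enlarging the $K_n^i$ and refining the partitions I would force corresponding blocks to have the same boundary count and the same genus. The genus is the delicate input: whenever a complementary component $C$ carries a non-planar end, the adjacent piece of $S_i\setminus\operatorname{int}(K_n^i)$ has infinite genus and one can slide arbitrary amounts of genus across its boundary, so this ``infinite genus is absorbing'' phenomenon supplies the freedom to balance blocks in the infinite-genus and mixed cases; when $g_i<\infty$ (so $\operatorname{Ends}_{\mathrm{np}}(S_i)=\varnothing$) one instead uses that only finitely many blocks carry genus and reindexes so these occur first and are matched by hand, all later blocks being planar.

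With combinatorially isomorphic exhaustions in hand, a back-and-forth construction finishes the proof: build homeomorphisms $\phi_n\colon K_n^1\to K_n^2$ with $\phi_{n+1}|_{K_n^1}=\phi_n$, extending across each block $B_n^1$ by choosing, for each pair of corresponding components, a homeomorphism onto the corresponding piece of $B_n^2$ restricting on the inner boundary to the parametrization already fixed by $\phi_n$; this is possible by the classification of compact surfaces with boundary together with the fact that a homeomorphism between homeomorphic such surfaces can be prescribed on a sub-collection of boundary circles, after an isotopy. The union $\phi:=\bigcup_n\phi_n\colon S_1\to S_2$ is then a bijection carrying $\operatorname{int}(K_n^1)$ onto $\operatorname{int}(K_n^2)$, hence continuous and open, i.e.\ the desired homeomorphism. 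The main obstacle, and essentially the whole difficulty, lies in the third paragraph: simultaneously matching the clopen partitions of the end spaces through $h$, equalizing boundary counts block by block, and distributing genus correctly --- exploiting the absorbing behavior of infinite genus near non-planar ends in one regime and a finite bookkeeping argument in the other --- together with ensuring the boundary parametrizations glue coherently across blocks, which rests on change-of-coordinates and isotopy-extension facts for compact surfaces with boundary.
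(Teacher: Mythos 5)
This statement is not proved in the paper at all: it is quoted as a known classification theorem and attributed to Richards \cite{richards} and Ker\'ekj\'art\'o, so there is no internal argument to compare against. Your sketch is, in substance, a reconstruction of the classical proof strategy that Richards himself uses: exhaust each surface by compact subsurfaces with no compact complementary components, read off the end space from the resulting nested clopen partitions, use the given homeomorphism of pairs of end spaces to interleave and combinatorially align the two exhaustions (matching planar with non-planar data and balancing genus block by block), and then run a back-and-forth extension over the blocks using the classification of compact surfaces with boundary. The correct reductions are all present: the ``only if'' direction is indeed routine, finite genus forces $\operatorname{Ends}_{\mathrm{np}}=\varnothing$ so genus bookkeeping is finite there, and in the infinite-genus case the fact that every neighborhood of a non-planar end contains infinitely many handles is exactly the ``absorbing'' mechanism that lets you equalize the genus of corresponding blocks. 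Two places where your outline leaves real work undone, and where Richards' paper spends its effort, are: (i) the simultaneous refinement argument that makes the component correspondences at level $n+1$ compatible with those at level $n$ and convergent to the prescribed map $h$ on ends, which Richards organizes via reduction to canonical (``standard'') surfaces rather than by ad hoc interleaving; and (ii) the orientation bookkeeping in the block-by-block extension --- one must fix orientations and take all partial homeomorphisms orientation-preserving so that a homeomorphism prescribed on the inner boundary circles of a block actually extends, which is where the change-of-coordinates/isotopy-extension input is really used. As a blind proposal it is a faithful outline of the standard argument, but it is a proof sketch of a substantial classical theorem rather than a complete proof, which is presumably why the paper cites it instead of proving it.
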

\subsection{Classification of surfaces up to diffeomorphism}
Two smooth surfaces are homeomorphic if and only if they are diffeomorphic. Therefore, the invariants that determine the homeomorphism type of an orientable surface suffice to determine their diffeomorphism type. This is a well-known fact which can be proved by assembling the following chain of deep results in manifold topology.

Firstly, T. Rad\'o shows that every topological surface admits a PL-structure \cite[Theorem 8.3]{moise}. Moise (Theorem 8.4) loc. cit. shows the \textit{Hauptvermutung} for surfaces, namely two homeomorphic surfaces are $PL$-equivalent.

Secondly, it follows from \cite[Theorem 6.5]{munkres} (and the fact that the first singular homology group of an orientable surface is free abelian) that $PL$-equivalent surfaces are diffeomorphic. Note that Munkres' requires diffeomorphisms to be only of class $C^1$, however $C^1$-diffeomorphic manifolds are $C^{\infty}$- diffeomorphic (see \cite[Ch. 2. Theorem 2.9]{hirsch}). 
\subsection{Extension of diffeomorphisms to the Freudenthal compactification}\label{sec:extension}
As before, consider $\overline S$, the Freudenthal compactification of $S$, and let $\widehat S$ denote the maximal subspace of $\overline{S}$ that is a surface (we think of $\widehat S$ as the result of ``filling in'' all the planar ends of $S$). For any $f\in\Homeo(S)$ define $\overline{f}\in\Homeo(\overline S)$ by
\beq
\overline{f}(x)
\begin{cases}
f(x) & \text{if}\ \ x\in S\\
f_\ast(x)& \text{if}\ \ x\in \mathrm{End}(S)
\end{cases}
\eeq
where $f_\ast(x)$ is the map on the inverse limit induced by the component maps 
$\pi_0\bigl(S\setminus K\bigr)\to \pi_0\bigl(S\setminus f(K)\bigr)$,
for each compact $K\subset S$. The assignment $f\mapsto\overline{f}$ defines a group homomorphism. Since any homeomorphism of $\overline{S}$ preserves $\widehat S$, there are well-defined continuous homomorphisms 
\beq
\Homeo(S)\rightarrow \Homeo(\overline{S})\rightarrow \Homeo(\widehat S).
\eeq
Precomposing with the inclusion $\Diff(S)\hookrightarrow\Homeo(S)$  yields the continuous homomorphism 
\beq
\kappa:\Diff(S)\to\Homeo(\widehat{S}).
\eeq
The surface $S$ lies in $\widehat S$ as an open dense subset, and so we can choose a marked point $\ast\in S\subset \overline{S}$. By a slight abuse of notation we also use $\kappa:\Diff_{\ast}(S)\to\Homeo_{\ast}(\widehat{S})$
\begin{rmk}
Even in the case when $S$ has only planar ends, so that $\widehat{S}=\overline{S}$ has a smooth structure, the image of the map $\kappa$ may not be contained in $\Diff(\widehat{S})$. As an example, consider the surface $X_g$ obtained by removing a Cantor set $\calC$ from the the closed orientable surface $S_g$ of genus $g$. Then $\widehat{X_g}$ is a surface homeomorphic and hence diffeomorphic to $S_g$. If the extension of every diffeomorphim of $X_g$ to its Freudenthal compactification $\widehat{X_g}\cong S_g$ were also a diffeomorphism, we would have the following commutative diagram of topological groups
\beq
\xymatrix{
\Diff(X_g)\ar[r]^-{\kappa}\ar[d]& \Diff(S_g,\calC)\ar[d]\\
\Homeo(X_g)\ar[r]^-{\kappa}& \Homeo(S_g,\calC)
}
\eeq
where the groups on the right are diffeomorphisms and homeomorphism of $S_g$ that preserve $\calC$ setwise.
The lower horizontal map is an isomorphism (c.f. \cite[Corollary B.3]{palmer-wu-Documenta}) and by the discussion in Section \ref{sec:cohomlogy-bigmcg} the left vertical map is a weak homotopy equivalence. This would yield an injective map on mapping class groups
\beq
\kappa_\ast:\Map(X_g)\hookrightarrow\Map(S_g,\calC),
\eeq
but this is impossible because the domain is uncountable whereas the codomain is countable, as a consequence of  \cite[Theorems 1 $\&$ 3]{funar-neretin}.
\end{rmk}
\section{Alternative constructions of the Euler class}\label{sec:othereuler}
Recall that we have defined the Euler class $e$ as the pullback of the universal Euler class $E\in H^2(B\GL_2^+(\R))$ along the map $d:B\Diff_\ast(S)\to B\GL_2^+(\R)$ induced by the derivative at the marked point $\ast\in S$. It will be convenient to view $e$ as arising from other constructions.
\subsection{The Euler class via the vertical tangent bundle}\label{sec:vertical-euler}
The universal smooth fibre bundle with fibre $S$ may be described in terms of classifying spaces as
\beq
S\to B\Diff_\ast(S)\xto{\pi} B\Diff(S).
\eeq
The \textit{vertical tangent bundle} of $\pi$ is the real vector bundle of rank 2 over $B\Diff_\ast(S)$ consisting of all vectors tangent to the fibers of $\pi$. More precisely, the universal principal $\Diff(S)$-bundle $\Diff(S)\to E\Diff(S)\to B\Diff(S)$ coincides with associated principal $\Diff(S)$-bundle of $\pi$. The group $\Diff(S)$ acts on the tangent bundle $TS$ through the derivative and the vertical tangent bundle is, by definition, the rank $2$ vector bundle
\begin{equation}\label{eq:vertical}
E\Diff(S)\times_{\Diff(S)}TS\to E\Diff(S)\times_{\Diff(S)} S\simeq B\Diff_\ast(S),
\end{equation}
which is classified by its Euler class in $H^2(B\Diff_\ast(S);\Z)$. This Euler class is exactly $e=d^\ast E$. In fact, note that the vector bundle \eqref{eq:vertical} is isomorphic to the vector bundle 
\beq
E\Diff_\ast(S)\times_{\Diff_\ast(S)}T_\ast S\to E\Diff_\ast(S)\times_{\Diff_\ast(S)}\ast\simeq B\Diff_\ast(S)
\eeq
which is nothing but the pullback of the canonical $2$-plane bundle over $B\GL_2^+(\R)$ along the derivative $d:B\Diff_\ast(S)\to B\GL_2^+(\R)$. 

This perspective on the Euler class will be used in Sections \ref{sec:MMM} and \ref{sec:non-realization}.
\subsection{The Euler class via the action on the circle}\label{sec:euler-action}

There is a geometrically defined action of $\Map_\ast(S)$ on the circle $S^1$, often called \emph{the Nielsen action}, which is roughly given by first lifting a diffeomorphism $f:S\to S$ to a diffeomorphism $\widetilde{f}:\Hyp\to\Hyp$ of the universal cover, and then extending it to a homeomorphism $\partial\widetilde{f}:\partial_{\infty}\Hyp\to\partial_{\infty}\Hyp$ of the circle at infinity.  This action was first defined by Nielsen when $S$ is a closed orientable hyperbolic surface (c.f. \cite[Sections 5.5.2 $\&$ 8.2.5]{FaMa12}), and it has recently been studied by Tappu \cite[Section 4]{Tappu2023} for orientable Nielsen-convex surfaces of infinite type;  see also \cite[Proposition 5.3]{Thurston}. We recall Tappu's generalization of the Nielsen action in Appendix \ref{sec:Nielsen-action}, and observe there that the action on the circle he defines is faithful, continuous, and by orientation preserving homeomorphisms. The action obtained by precomposing this action with the  projection $\Diff_\ast(S)\to\Map_\ast(S)$ will be denoted from now on by
\begin{equation}\label{Tappu}
\eta:\Diff_\ast(S)\to\Homeo^+(S^1).
\end{equation}

There is a deformation retraction of $\Homeo^+(S^1)$ onto the subgroup $\SO(2)$ of rotations \cite[Proposition 4.2]{ghys}, so we can identify the cohomology rings
\beq
H^*(B\Homeo^+(S^1))\cong H^*(B\SO(2))\cong\Z[E].
\eeq
Therefore, if $S$ is an orientable Nielsen convex surface of infinite type with a marked point, we can pullback $E$ along the map $B\eta$ on classifying spaces induced by $\eta$ to obtain the class
\beq
(B\eta)^*E\in H^2(B\Diff_\ast(S)).
\eeq

\begin{prop}\label{prop:euler-classes-coincide}
For all oriented Nielsen-convex hyperbolic surfaces $S$, the classes $(B\eta)^*E$ and $e=d^*E$ are equal.
\end{prop}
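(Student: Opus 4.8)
The plan is to realize both classes as the Euler classes of oriented circle bundles over $B\Diff_\ast(S)$ and then to construct an isomorphism between those bundles. First, by \S\ref{sec:vertical-euler}, $e=d^\ast E$ is the Euler class of the vertical tangent bundle of the universal $S$-bundle at the marked point, a rank-$2$ oriented vector bundle over $B\Diff_\ast(S)$. Equipping the universal $S$-bundle with a fibrewise Riemannian metric puts a metric on this bundle without changing its Euler class; I would take this metric to be hyperbolic on every fibre — possible because the space of hyperbolic metrics on $S$ is contractible, so the associated bundle over $B\Diff_\ast(S)$ has a section — and denote the resulting oriented circle bundle of unit vertical tangent vectors by $\calT\to B\Diff_\ast(S)$, so that $e=e(\calT)$. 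On the other side, $B\eta$ classifies the circle bundle $\mathcal N:=E\Diff_\ast(S)\times_{\Diff_\ast(S)}S^1$ (with $\Diff_\ast(S)$ acting via $\eta$), and $(B\eta)^\ast E=e(\mathcal N)$ since $H^\ast(B\Homeo^+(S^1);\Z)=\Z[E]$; moreover $\eta$ is by construction the composite $\Diff_\ast(S)\to\Map_\ast(S)\to\Homeo^+(S^1)$ of Appendix \ref{sec:Nielsen-action}, hence is locally constant, so $\mathcal N$ is a \emph{flat} circle bundle with holonomy $\eta$. It therefore suffices to produce an orientation-preserving isomorphism $\calT\cong\mathcal N$.

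The bridge is the classical fact that on a complete hyperbolic surface the unit tangent circle at a point of the universal cover is identified with the circle at infinity via endpoints of geodesic rays. Concretely, for a fibre $S_b$ with its hyperbolic metric, universal cover $\widetilde{S_b}$, marked point $p=\sigma(b)$, and a lift $\widetilde{p}\in\widetilde{S_b}$ chosen continuously over simply connected charts of the base, I would send a unit tangent vector $v$ at $p$ to
\[
\Theta_b(v)\;=\;\lim_{t\to\infty}\exp_{\widetilde{p}}(t\,\widetilde{v})\;\in\;\partial_\infty\widetilde{S_b},
\]
where $\widetilde{v}$ is the lift of $v$ at $\widetilde{p}$. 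With the orientation conventions of \S\ref{sec:vertical-euler} and of Appendix \ref{sec:Nielsen-action} both induced by the orientation of $S$, each $\Theta_b$ is an orientation-preserving homeomorphism varying continuously with $b$, so the $\Theta_b$ assemble into an isomorphism of oriented circle bundles $\Theta\colon\calT\xrightarrow{\,\cong\,}\mathcal N'$, where $\mathcal N'$ is the circle bundle with fibre $\partial_\infty\widetilde{S_b}$. It then remains to identify $\mathcal N'$ with $\mathcal N$. Working in local trivialisations of the universal $S$-bundle, a routine check — using that $\exp$ is equivariant for isometries, in particular for deck transformations — shows that $\Theta$ matches the transition cocycle of $\calT$ (the derivatives at $p$ of the transition diffeomorphisms) with the cocycle of $\mathcal N'$ given by their boundary extensions. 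By Nielsen-convexity these transition diffeomorphisms lift to $\pi_1(S)$-equivariant quasi-isometries of $\widetilde{S_b}$, with constants uniform over compact subsets of the base, that extend continuously to $\partial_\infty\widetilde{S_b}$; the extension depends only on the isotopy class of the diffeomorphism, hence is locally constant in the base, and — after the canonical boundary identifications between different hyperbolic metrics on $\widetilde{S}$, themselves boundary maps of equivariant quasi-isometries, which absorb the dependence on the auxiliary metric — it is exactly $\eta$. This is the content of Tappu's construction recalled in Appendix \ref{sec:Nielsen-action}. Hence $\mathcal N'\cong\mathcal N$, and $e=e(\calT)=e(\mathcal N')=e(\mathcal N)=(B\eta)^\ast E$.

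The hard part will be this last identification $\mathcal N'\cong\mathcal N$: one must verify that the fibrewise circle at infinity genuinely forms a flat circle bundle whose holonomy is $\eta$ — not $\eta$ twisted by deck transformations, and independent of the auxiliary fibrewise hyperbolic metric. This is exactly where the hypothesis that $S$ be Nielsen-convex and hyperbolic is indispensable: it is what guarantees that $\widetilde{S}$ has a well-behaved circle at infinity depending continuously on the hyperbolic structure, and that lifts of diffeomorphisms are quasi-isometries with continuous boundary extension — the same facts that make $\eta$ well-defined and continuous. By contrast, the comparison $\calT\cong\mathcal N'$ through the exponential map is elementary once the fibrewise metric is fixed. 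Finally, it bears emphasising that $\Theta$ is \emph{not} expected to intertwine the fibrewise derivative action on $\calT$ with the Nielsen action on $\mathcal N$ — these are genuinely different actions of $\Diff_\ast(S)$ on $S^1$, and $\calT$ is not flat — so the argument must be carried out at the level of circle bundles rather than of circle actions; this is why the statement is not immediate.
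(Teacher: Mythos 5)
Your geometric starting point --- using geodesic rays from the marked point to pass between the unit tangent circle and the circle at infinity --- is the same mechanism the paper uses, but as written your argument has genuine gaps, and they sit exactly at the step you yourself call ``the hard part.'' Your identification $\mathcal{N}'\cong\mathcal{N}$ rests on the claim that the transition diffeomorphisms lift to $\pi_1$-equivariant quasi-isometries of $\widetilde{S_b}$ (with constants uniform over compacta) whose boundary extensions give the cocycle. For infinite-type surfaces this is not available: a diffeomorphism of a noncompact complete hyperbolic surface need not be a quasi-isometry (e.g.\ an infinite product of Dehn twists about disjoint curves exiting an end distorts lengths unboundedly), and likewise the identity map between two complete hyperbolic metrics on $S$ need not be a quasi-isometry, so the ``canonical boundary identifications between different hyperbolic metrics'' cannot be produced this way either. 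This failure is precisely why the Nielsen action recalled in Appendix \ref{sec:Nielsen-action} is built following Tappu, through the induced automorphism of the Fuchsian group and the dense set of sinks of hyperbolic elements, not through quasi-isometric boundary extension. Consequently your ``routine check'' that the cocycle of $\mathcal{N}'$ is $\eta$ is not routine: you would have to prove that your visual (geodesic-ray) identification $\Theta_b$ is compatible with Tappu's sink-based boundary maps, that the metric-change identifications exist and vary continuously with $b$, and only then that the resulting cocycle is $\eta$; none of this is supplied, and the justification offered is the one that fails in this setting. In addition, your very first step --- a fibrewise Nielsen-convex complete hyperbolic metric on the universal $S$-bundle over $B\Diff_\ast(S)$, asserted via ``the space of hyperbolic metrics is contractible'' --- is itself an unproven and nontrivial claim for infinite-type surfaces.

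For comparison, the paper's proof avoids all of these issues and needs much less than a bundle isomorphism (which, as you observe, is exactly as strong as the proposition). It fixes a single Nielsen-convex hyperbolic structure on $S$ and, for each $f\in\Diff_\ast(S)$, forms the blow-up $\mathrm{bl}_y(\overline{\Hyp})$ of the compactified hyperbolic plane at the fixed lift $y$ of $\ast$: a closed annulus whose two boundary circles are the circle of directions at $y$ and $\partial_\infty\Hyp$, on which $f$ acts by the projectivized derivative $[d\widetilde{f}_y]$ and by Tappu's boundary map $\partial\widetilde{f}$, respectively. The Borel construction then produces an annulus bundle over $B\Diff_\ast(S)$ containing the circle bundle classified by $d$ and the one classified by $B\eta$ as fibrewise deformation retracts; hence the two are fibre homotopy equivalent and their Euler classes coincide. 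No fibrewise metrics, no quasi-isometries, and no bundle isomorphism are required --- the interpolating annulus replaces your map $\Theta$ and absorbs the cocycle comparison you would otherwise have to carry out by hand.
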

The proposition is due to Morita  \cite[Proposition 4-1]{Morita-bounded} in the case when $S$ is compact and working directly with the Euler class of the vertical tangent bundle defined above (see also  	\cite[Proposition 3.1]{hamenstaedt}). Bestvina--Church--Souto \cite[Lemma 3.2]{bestvina-church-souto} give a different argument to prove that the Euler classes agree when restricted to the ``push'' subgroup $\pi_1(S,\ast)$ of the mapping class group of $S$. Our proof mimics theirs.
\begin{proof}
Let $\overline{\Hyp}=\Hyp\cup\partial_{\infty}\Hyp$ be the geodesic compactification of the hyperbolic plane $\Hyp$. Fix a $y\in\Hyp\subset\overline{\Hyp}$ in the preimage of $\ast\in S$ under the universal covering projection $\Hyp\to S$. Identify $T_\ast S$ with $T_y\Hyp$ and view the tangent circle $S_y\Hyp$ at $y\in\Hyp$ as equivalence classes of rays in $T_y\Hyp$ emanating from the origin. Points in $S_y\overline{\Hyp}$ will be denoted by $[v]$, where $v$ is a unit tangent vector at $y$. Consider the space
Let $\overline{\Hyp}=\Hyp\cup\partial_{\infty}\Hyp$ be the geodesic compactification of the hyperbolic plane $\Hyp$. Fix a $y\in\Hyp\subset\overline{\Hyp}$ in the preimage of $\ast\in S$ under the universal covering projection $\Hyp\to S$. Identify $T_\ast S$ with $T_y\Hyp$ and view the tangent circle $S_y\Hyp$ at $y\in\Hyp$ as equivalence classes of rays in $T_y\Hyp$ emanating from the origin. Points in $S_y\Hyp$ will be denoted by $[v]$, where $v$ is a unit tangent vector at $y$. Consider the space
\beq
\mathrm{bl}_y(\overline{\Hyp}):=\left\{(\xi,[v])\in\overline{\Hyp}\times S_y\Hyp\ |\ \xi=\exp_y(tv)\ \ \text{for some}\ \ t\in [0,\infty]\right\}.
\eeq 
Note that $\mathrm{bl}_y(\overline{\Hyp})$ is homeomorphic to a closed annulus where one boundary component is $\partial_\infty\Hyp$ and the other is $S_y\Hyp$.

A diffeomorphism $f:S\to S$ fixing $\ast$ lifts uniquely to a diffeomorphism $\widetilde{f}:\Hyp\to\Hyp$ fixing $y$. This lift extends to a homeomorphism $\widehat{f}:\mathrm{bl}_y(\overline{\Hyp})\to\mathrm{bl}_y(\overline{\Hyp})$ as follows:
\beq
\widehat{f}(\xi,[v])=(\overline{f}(\xi),[d\widetilde{f}_y(v)]),
\eeq
where $\overline{f}(\xi)=\widetilde{f}(\xi)$ if $\xi\in\Hyp$ and $\overline{f}(\xi)=\partial\widetilde{f}(\xi)$ if $\xi\in\partial_{\infty}\Hyp$. Here, the map $\partial\widetilde{f}$ is the extension of $\widetilde{f}$ to the geodesic boundary $\partial_{\infty}\Hyp$ (see Appendix \ref{sec:Nielsen-action}).

If we identify $\mathrm{bl}_y(\overline{\Hyp})$ with the cylinder $S^1\times [0,1]$ in a way that $S^1\times\{0\}$ corresponds to $S_y\Hyp$, then the assignment $f\mapsto \widehat{f}$
yields a homomorphism
\beq
\Phi:\Diff_\ast(S)\to \Homeo(S^1\times [0,1]).
\eeq
The $S^1\times [0,1]$-bundle
\beq
\mathcal{E}:=E\Diff_\ast(S)\times_{\Diff_\ast(S)}\left( S^1\times [0,1]\right)\to B\Diff_\ast(S)
\eeq
where the action of $\Diff_\ast(S)$ on $S^1\times [0,1]$ is via $\Phi$, contains the subbundles $$\mathcal{E}_i:=E\Diff_\ast(S)\times_{\Diff_\ast(S)}\left( S^1\times \{i\}\right),$$ $i=0,1$ which are the circle bundles induced by the derivative and by $\eta$ respectively. Since the inclusions $\mathcal{E}_i\hookrightarrow\mathcal{E}$ are fiber-preserving homotopy equivalences, the bundles $\mathcal{E}_0$ and $\mathcal{E}_1$ are fiber homotopy equivalent. Therefore $d^*E=(B\eta)^*E$.
\end{proof}
This point of view on the Euler class will be combined with Proposition \ref{prop:euler-freudenthal} below to analyze the case of bundles of surfaces of finite genus in Section \ref{sec:finite genus}.
\subsection{The Euler class and the planar ends}\label{sec:Euler-compact}
Let $S$ be an orientable Nielsen-convex surface and $\widehat{S}$ be the surface obtained by ``closing up'' all the planar ends of $S$ (c.f. Section \ref{sec:extension}). Take a marked point $\ast\in S\subset \widehat{S}$ and  let $\kappa:\Diff_\ast(S)\to\Homeo_\ast(\widehat{S})$ defined as in Section \ref{sec:extension}. As $\widehat{S}$ is also a Nielsen-convex surface, we can form the following (noncommutative) diagram
\beq
\xymatrix{
\Diff_\ast(S)\ar[dr]_{\eta_S}\ar[r]^-{\kappa}&\Homeo_\ast(\widehat{S})\ar[d]^{\eta_{\widehat{S}}}\\
&\Homeo^+(S^1)
}
\eeq
where $\eta_S$ and $\eta_{\widehat{S}}$ are the actions on the circle introduced in \eqref{Tappu} and defined in Appendix \ref{sec:Nielsen-action}.
\begin{prop}\label{prop:euler-freudenthal}
Let $d_S:B\Diff_\ast(S)\to B\GL_2^+(\R)$ be the map induced by the derivative at $\ast\in S$. Then
$(B\kappa)^*
(B\eta_{\widehat{S}})^*E
=(B\eta_S)^*E=d_S^*E=e$.
\end{prop}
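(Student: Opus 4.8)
The plan is to establish the three equalities in the chain $(B\kappa)^*(B\eta_{\widehat S})^*E=(B\eta_S)^*E=d_S^*E=e$ one at a time. The last two are essentially already in hand: $d_S^*E=e$ is the definition of the Euler class from Section \ref{sec:othereuler}, and $(B\eta_S)^*E=d_S^*E$ is exactly the content of Proposition \ref{prop:euler-classes-coincide}, applied to the Nielsen-convex surface $S$. So the whole task reduces to the first equality, namely that pulling back $E$ along $B(\eta_{\widehat S}\circ\kappa)$ agrees with pulling back along $B\eta_S$. Since pullback of cohomology is functorial and depends only on the homotopy class of the map of classifying spaces, it suffices to show that the two homomorphisms
\beq
\eta_S,\quad \eta_{\widehat S}\circ\kappa\ :\ \Diff_\ast(S)\longrightarrow \Homeo^+(S^1)
\eeq
induce homotopic maps on classifying spaces; the cleanest way to do this is to produce a homotopy of the associated oriented $S^1$-bundles over $B\Diff_\ast(S)$, or equivalently a $\Diff_\ast(S)$-equivariant comparison at the level of the circle actions.

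First I would unwind the two actions concretely, following the blow-up model in the proof of Proposition \ref{prop:euler-classes-coincide}. Fix a universal cover $\Hyp\to S$ and a lift $y\in\Hyp$ of $\ast$; the action $\eta_S$ comes from the action on the tangent circle $S_y\Hyp$ (identified with the derivative circle $S(T_\ast S)$), while the boundary action on $\partial_\infty\Hyp$ is the ``other'' boundary circle of the annulus $\mathrm{bl}_y(\overline\Hyp)$, and the two are fiber-homotopy-equivalent over $B\Diff_\ast(S)$, so $\eta_S$ is interchangeable with the boundary-at-infinity action up to the homotopy we are allowed to use. The key geometric input is that the covering $\Hyp\to S$ and the covering $\Hyp\to\widehat S$ are compatible: closing up a planar end of $S$ corresponds, on the level of Fuchsian groups, to the inclusion of the covering group $\pi_1(S)\hookrightarrow\pi_1(\widehat S)$ as a subgroup, realized by the \emph{same} hyperbolic plane $\Hyp$ with the same point $y$. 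A diffeomorphism $f\in\Diff_\ast(S)$ lifts to $\widetilde f:\Hyp\to\Hyp$ fixing $y$, and $\kappa(f)\in\Homeo_\ast(\widehat S)$ lifts to the \emph{same} homeomorphism $\widetilde f$ of $\Hyp$ (this is precisely why $\kappa$ is defined the way it is: $\kappa(f)$ is the Freudenthal extension, which on $\Hyp$ is still $\widetilde f$). Hence $\eta_{\widehat S}(\kappa(f))=\partial\widetilde f=\eta_S(f)$ — literally the same homeomorphism of $\partial_\infty\Hyp$ — once both are written in the boundary-at-infinity model. Since both $\eta_S$ and $\eta_{\widehat S}\circ\kappa$ are homotopic to this common boundary action by the annulus argument of Proposition \ref{prop:euler-classes-coincide} (applied to $S$ and to $\widehat S$ respectively, and pulled back along $\kappa$), the induced bundles are fiber homotopy equivalent, so $(B\eta_S)^*E=(B\eta_{\widehat S}\circ B\kappa)^*E$.

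The main obstacle I expect is not any single hard calculation but rather pinning down the compatibility of the hyperbolic structures and the universal covers of $S$ and $\widehat S$ so that the lifts $\widetilde f$ genuinely coincide, and checking that the boundary-at-infinity circles $\partial_\infty\Hyp$ for the two Fuchsian groups are identified compatibly with the $\Diff_\ast(S)$-actions. One must be a little careful that "Nielsen-convex" is exactly the hypothesis needed so that $S$, and then also $\widehat S$, admits the convex core structure making Tappu's action well-defined, and that $\kappa$ sends Nielsen-convex structures to Nielsen-convex structures; this is where the standing assumption in the statement that both $S$ and $\widehat S$ are Nielsen-convex gets used. Once that bookkeeping is done, the homotopy-of-bundles argument is a verbatim repetition of the annulus/blow-up construction already carried out in the proof of Proposition \ref{prop:euler-classes-coincide}, now performed equivariantly over $B\Diff_\ast(S)$ and naturally with respect to $\kappa$, so I would simply cite that proof for the fiber-homotopy-equivalence step rather than redo it.
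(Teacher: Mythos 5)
There is a genuine gap, and it sits exactly at the step the whole proof reduces to. Your argument for the first equality rests on the claim that closing up the planar ends corresponds to an inclusion $\pi_1(S)\hookrightarrow\pi_1(\widehat S)$ of Fuchsian groups acting on \emph{the same} hyperbolic plane $\Hyp$ with the same basepoint lift $y$, so that $f$ and $\kappa(f)$ have literally the same lift $\widetilde f$ and hence $\eta_S(f)=\eta_{\widehat S}(\kappa(f))$ on $\partial_\infty\Hyp$. This is false. The inclusion $S\hookrightarrow\widehat S$ induces a \emph{surjection} $\pi_1(S,\ast)\twoheadrightarrow\pi_1(\widehat S,\ast)$ (loops around the filled-in planar ends die), not an injection; the universal covers of $S$ and of $\widehat S$ are different covering spaces with different deck groups and different covering projections, and there is no identification of the two copies of $\Hyp$, of the basepoint lifts, or of the boundary circles that is compatible with the two actions. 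A concrete obstruction: for $S=X_g=S_g\setminus\calC$ one has $\widehat S=S_g$, and $\eta_{\widehat S}\circ\kappa$ factors through the countable group $\Map_\ast(S_g)$, whereas $\eta_S$ is faithful on the uncountable group $\Map_\ast(X_g)$ (Proposition \ref{prop:faithful}); so the two homomorphisms to $\Homeo^+(S^1)$ cannot be equal, nor conjugate. The proposition is precisely the statement that two genuinely different circle actions have the same Euler class; one cannot get it by asserting the actions coincide.

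The correct route, which is the one the paper takes, is not to compare $\eta_S$ and $\eta_{\widehat S}\circ\kappa$ with each other directly but to compare each of them with the derivative circle bundle. You run the blow-up/annulus argument of Proposition \ref{prop:euler-classes-coincide} twice: once in the universal cover of $S$ (that is Proposition \ref{prop:euler-classes-coincide} itself, giving $(B\eta_S)^*E=d_S^*E$), and once in the universal cover of $\widehat S$, where for $f\in\Diff_\ast(S)$ one uses the lift of $\kappa(f)$ fixing $y$ and sets $\check f(\xi,[v])=(\overline{\kappa(f)}(\xi),[d\widetilde f_y(v)])$ on $\mathrm{bl}_y(\overline\Hyp)$. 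The only compatibility needed is local: $\kappa(f)$ agrees with $f$ on a neighborhood of $\ast$ in $\widehat S$, so it is smooth there and $d\widetilde{\kappa(f)}_y=d\widetilde f_y$, identifying the inner boundary circle of the annulus with the derivative action. The resulting fiber homotopy equivalence of circle bundles gives $(B\kappa)^*(B\eta_{\widehat S})^*E=d_S^*E$, and the chain of equalities follows. Your proposal gestures at this ("both are homotopic to this common boundary action by the annulus argument") but anchors it on the nonexistent common boundary action; replacing that anchor by the derivative bundle, as above, is not a bookkeeping fix but the actual content of the proof.
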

\begin{proof}
The second equality is Proposition \ref{prop:euler-classes-coincide} applied to the surface $S$. For the first equality, we argue as in its proof, but this time we use the universal covering projection $\Hyp\to\widehat{S}$ to construct, for each $f\in\Diff_\ast(S)$, a self-homeomorphism $\check{f}:\mathrm{bl}_{y}(\overline{\Hyp})\to\mathrm{bl}_{y}(\overline{\Hyp})$ of the blowup, defined by
\beq
\check{f}(\xi,[v])=(\overline{\kappa(f)}(\xi),[d\widetilde{f}_y(v)]).
\eeq
Here we use that $\kappa(f)$ agrees with $f$ on an open neighborhood of $\ast\in S\subset\widehat{S}$, which implies $d\widetilde{f}_y=d\widetilde{\kappa(f)}_y$.

From this point, the same argument  as above produces a fiber homotopy equivalence between the circle bundles induced by $d_S$ and $B\kappa\circ
B\eta_{\widehat{S}}$, and the result follows.
\end{proof}
In this sense, the Euler class does not distinguish between bundles of surfaces with planar ends and their fiberwise compactifications. This is consistent with \cite[Remark B.4]{palmer-wu-Documenta}. We will exploit this fact in Section \ref{sec:finite genus} to get upper bounds on the torsion of certain powers of the Euler class of infinite type surfaces with finite genus.
\section{Proofs of Theorems \ref{thm:pure-infinite} and \ref{thm:pure-finite}}\label{sec:MMM}
Throughout this section it is useful to think of the Euler class as defined in Section \ref{sec:vertical-euler}. We will prove Theorems \ref{thm:pure-infinite} and \ref{thm:pure-finite}. For convenience we state them here as Theorem \ref{prop:marked-points} and \ref{prop:MMM} respectively.
\subsection{Infinite-genus surfaces}\label{sec:pure}   First we focus on studying the behavior of the Euler classes for oriented smooth surfaces of infinite genus.
\begin{thm}\label{prop:marked-points}
Let $S$ be a surface of infinite genus. Then, for all $k\geq 1$, there is an injective ring homomorphism\footnote{Our classes $e_i$ correspond to the classes $\sigma_i$ in Morita's article.}
\beq
\Q[e_1,\ldots,e_k]\hookrightarrow H^\ast(B\PDiff^k(S);\Q).
\eeq
\end{thm}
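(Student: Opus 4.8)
The strategy is to reduce to the case of a compact surface of high genus, where the algebraic independence of the Euler classes is already known, and then to transfer that independence to the infinite-genus surface via a suitably chosen map of classifying spaces. Concretely, fix $g$ large (to be determined), and let $S_g^k$ denote a genus $g$ compact surface with $k$ marked points $p_1,\dots,p_k$ in a disk disjoint from a fixed boundary circle. By Morita's theorem combined with Boldsen's stability improvement (cited in the excerpt), the classes $e_1,\dots,e_k$ are algebraically independent in $H^*(B\PDiff^k(S_g);\Q)$ in a range of degrees growing with $g$; in particular, for each fixed polynomial $P\in\Q[e_1,\dots,e_k]$ of degree $d$, there is a $g$ with $P$ nonzero in $H^{2d}(B\PDiff^k(S_g);\Q)$.

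\textbf{Key steps.} First I would build, for the infinite-genus surface $S$, a homomorphism $\PDiff^k(S_g)\to\PDiff^k(S)$ — or rather a compatible system as $g\to\infty$ — by using that $S$ contains a subsurface diffeomorphic to $S_g$ minus an open disk (since $S$ has infinite genus, it contains arbitrarily high-genus compact subsurfaces with one boundary circle, and we may arrange the $k$ marked points inside). Extending diffeomorphisms supported in this subsurface by the identity gives a continuous homomorphism $\Diff^k_c(S_g\setminus\mathrm{disk})\to\PDiff^k(S)$, hence a map on classifying spaces $B\PDiff^k(S_g,\partial)\to B\PDiff^k(S)$ (using $\Diff_c$ of a surface with one boundary circle models $\Diff$ of the surface with one puncture, with the Euler classes at the $p_i$ pulling back correctly). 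Second, I would check that under this map the class $e_i\in H^2(B\PDiff^k(S);\Q)$ pulls back to the class $e_i\in H^2(B\PDiff^k(S_g,\partial);\Q)$; this is immediate from the vertical-tangent-bundle description of the Euler class in Section~\ref{sec:vertical-euler}, since the vertical tangent bundle restricted to the subsurface bundle is the vertical tangent bundle of the latter. Third, since the target of the polynomial map is a graded ring and a polynomial $P$ is nonzero iff each of its graded pieces is, it suffices to show that for each fixed degree $2d$ the composite $\Q[e_1,\dots,e_k]_{\le d}\to H^*(B\PDiff^k(S);\Q)\to H^*(B\PDiff^k(S_g,\partial);\Q)$ is injective for $g$ large; stability (Boldsen, together with the fact that capping off a boundary circle is a homology isomorphism in a range, and that the marked-point fibrations behave well under stabilization) identifies $H^*(B\PDiff^k(S_g,\partial);\Q)$ with $H^*(B\PDiff^k(S_g);\Q)$ in the relevant range, where Morita--Boldsen gives the injectivity. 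Piecing these together over all $d$ yields the injective ring homomorphism claimed.

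\textbf{Main obstacle.} The delicate point is the comparison between $\PDiff^k$ of a finite-type surface with a marked point (or boundary) and the stable homology of the mapping class group: one must be careful that adding $k$ marked points only shifts the stable range in a controlled way, and that the Euler classes $e_i$ remain algebraically independent after capping off the boundary and after the genus stabilization maps. The cleanest route is to work with the braided/marked-point version of the Madsen--Weiss theorem and Boldsen's range, noting that the fibration $S_g\to B\PDiff^k(S_g)\to B\PDiff^{k-1}(S_g)$ (adding one marked point) has a section-free analysis via the Serre spectral sequence in which $e_k$ restricts to the Euler class of the fiber, so that algebraic independence propagates inductively on $k$. I would expect the bookkeeping of ranges — ensuring the fixed-degree-$2d$ statement survives for $g$ sufficiently large as a function of $d$, uniformly in $k$ — to be the only real work; the geometric input (embedding high-genus subsurfaces into $S$) is elementary given that $S$ has infinite genus.
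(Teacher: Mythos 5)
Your plan is sound and would prove the theorem, but it is a genuinely different (and heavier) route than the paper's. You reduce to a \emph{single} high-genus subsurface $S_{g,1}\subset S$ containing all $k$ marked points, and you then need the full Morita--Boldsen statement that $e_1,\dots,e_k$ are algebraically independent in $H^*(B\PDiff^k(S_g);\Q)$ in the stable range, together with the stability bookkeeping you flag (capping off the boundary circle is a homology isomorphism in the range, marked points only shift the range in a controlled way) to transfer that independence to $B\PDiff^k_\partial(S_{g,1})$ and then, via extension by the identity, to $B\PDiff^k(S)$; this is essentially the same mechanism the paper uses for the finite-genus case (Theorem \ref{prop:MMM}). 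The paper's proof of Theorem \ref{prop:marked-points} instead embeds $k$ \emph{disjoint} one-holed subsurfaces $S_{g_1,1}\sqcup\cdots\sqcup S_{g_k,1}\hookrightarrow S$, one per marked point, with the genera $g_j$ chosen independently depending on the polynomial; pulling back along the induced map $\iota$ sends $e_j$ to $\pi_j^*\varepsilon_j$, and the K\"unneth theorem then separates the variables, so that the only input needed is the rational non-vanishing of powers of a \emph{single} Euler class on $B\Diff_{p}(S_{g,1})$ for $g$ large. What your approach buys is uniformity (one subsurface, one stabilization argument, parallel to the finite-genus theorem); what the paper's buys is the complete avoidance of the stable-range and boundary-capping bookkeeping that you correctly identify as the delicate part of your argument. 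Two small corrections to your sketch: the fiber of the forgetful map $B\PDiff^k(S_g)\to B\PDiff^{k-1}(S_g)$ is the surface with the $k-1$ remaining marked points deleted, not $S_g$ itself; and the inductive Serre spectral sequence analysis you propose is unnecessary, since Morita's theorem (with Boldsen's range), as cited in the paper, already includes the marked-point Euler classes.
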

\begin{proof}
We follow \cite[Theorem 7.5]{morita}.
Let $S_{g_1,1},\ldots, S_{g_k,1}$ surfaces of genus $g_j$ with one boundary component.  Consider an embedding of the disjoint union of the $S_{g_i,1}$ in $S$ 
\beq
S_{g_1,1}\sqcup\cdots\sqcup S_{g_k,1} \hookrightarrow S
\eeq
such that $p_i\in S_{g_i,1}$. This embedding induces a map
\beq
\iota:B\Diff_{p_1}(S_{g_1,1})\times\cdots\times B\Diff_{p_k}(S_{g_k,1})\to B\PDiff^k(S).
\eeq
Let $\pi_j:B\Diff_{p_1}(S_{g_1,1})\times\cdots\times B\Diff_{p_k}(S_{g_k,1})\to B\Diff_{p_j}(S_{g_j,1})$  be the projection onto the $j$-th factor, and denote by $\varepsilon_j$ the Euler classes in $H^2(B\Diff_{p_j}(S_{g_j,1}))$. Then
\beq
\iota^*(e_j)=\pi_j^*(\varepsilon_j).
\eeq
Now let $p(e_1,\ldots,e_k)=\sum a_{i_1,\cdots,i_k}e_1^{i_1}\cdots e_k^{i_k}$ be a polynomial in the Euler classes. Choose the genus $g_j$ large enough so that the monomial in $e_j$ is nontrivial when restricted to $H^*(B\Diff_{p_j}(S_{g_j,1});\mathbb{Q})$. If $p(e_1,\ldots,e_k)=0$ then
\beq
p(\iota^*e_1,\ldots,\iota^*e_k)=p(\pi_1^*\varepsilon_1,\ldots,\pi_k^*\varepsilon_k)=0.
\eeq
By K\"unneth's formula, the class $\pi_j^*\varepsilon_j$ coincides with the restriction of $e_j$ to $B\Diff_{p_j}(S_{g_j,1})$, so the polynomial vanishes only when it is identically zero.
\end{proof}
\begin{rmk}
It is worth mentioning that for surfaces $S$ of infinite genus with nonplanar ends, Palmer--Wu \cite[Theorem 8.7]{palmer-wu-JLMS} show that the duals of the Euler classes are linearly independent in the homology of the subgroup of $\Map(S\setminus\{p_1,\ldots,p_k\})$ consisting of mapping classes with compactly supported representatives.
\end{rmk}
\subsection{MMM classes and finite-genus surfaces}\label{sec:MMMPure}
Recall that Mumford \cite{mumford} , Miller \cite{miller} and Morita \cite{morita} defined the \emph{MMM classes} $\mu_i$ in $H^{2i}(B\Diff(S_g); \mathbb{Q})$  for a closed orientable surface $S_g$ of genus $g\geq 2$, by integration  of the $(i + 1)$-th power of the tangential Euler class in the universal smooth $S_g$-bundle (see Section \ref{sec:vertical-euler}). 
Integration along the fibers is a trivial map when $S$ is a surface of infinite type, and so the MMM classes do not have an immediate analog in this case. Nevertheless, for orientable  surfaces of finite genus, we can pullback these $\mu_i$-classes along the map induced by the Freudenthal compactification. More precisely, as before let $Y_g$ denote an  oriented surface  of genus $g\geq 2$. For $i\geq 1$, we define cohomology classes by 
\beq 
\nu_i:=(B\kappa)^*\mu_i\in H^{2i}(B\Diff(Y_g);\Q),
\eeq
where $B\kappa:B\Diff(Y_g)\to B\Homeo(S_g)$ is the map induced by the Freudenthal compactification defined in Section \ref{sec:extension}.
\begin{thm}\label{prop:MMM}Let $Y_g$ an oriented smooth infinite-type surface of finite genus $g\geq 2$ and $k\geq 0$.
There is an injective ring homomorphism
\beq
\Q[\nu_1,\ldots,\nu_{g-2},e_1,\ldots,e_k]\rightarrow H^\ast(B\PDiff^k(Y_g);\Q)
\eeq
up to degree $\lfloor\frac{2g-2}{3}\rfloor$.   
\end{thm}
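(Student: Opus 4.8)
The plan is to combine the strategy of Theorem~\ref{prop:marked-points} with the fact that, via the map $B\kappa\colon B\Diff(Y_g)\to B\Homeo(S_g)\simeq B\Diff(S_g)$, the finite-genus infinite-type surface ``sees'' the stable cohomology of the mapping class group of $S_g$. First I would pick embedded subsurfaces: a genus-$g$ subsurface with one boundary component $S_{g,1}\hookrightarrow Y_g$ carrying the MMM-behavior, together with $k$ disjointly embedded once-punctured (or once-boundaried) pieces $S_{h_j,1}$ around the marked points $p_j$, all disjoint from $S_{g,1}$; here the $h_j$ are chosen large enough that the relevant monomials in $e_j$ survive in $H^\ast(B\Diff_{p_j}(S_{h_j,1});\Q)$, exactly as in the proof of Theorem~\ref{prop:marked-points}. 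This gives a map
\beq
\iota\colon B\Diff(S_{g,1})\times \prod_{j=1}^k B\Diff_{p_j}(S_{h_j,1})\longrightarrow B\PDiff^k(Y_g),
\eeq
and by naturality of the vertical tangent bundle (Section~\ref{sec:vertical-euler}) one has $\iota^\ast(e_j)=\pi_j^\ast(\varepsilon_j)$ for the Euler classes on the puncture factors, while $\iota^\ast(\nu_i)$ restricts to the MMM class $\mu_i$ pulled back to $B\Diff(S_{g,1})$.

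The key input is then Morita's algebraic independence result, in the form sharpened by Boldsen: the $\mu_i$ (equivalently the $\nu_i$) together with the Euler class of $S_{g,1}$ are algebraically independent in $H^\ast(B\Diff(S_{g,1});\Q)$ up to degree $\lfloor (2g-2)/3\rfloor$, and more precisely the subring $\Q[\nu_1,\dots,\nu_{g-2}]$ injects up to that degree (note that on $S_{g,1}$ — one boundary component, so effectively one marked point — the class that behaves like $\mu_{g-1}$ is expressible via the others, which is why the range is $\nu_1,\dots,\nu_{g-2}$; this matches the statement). Combined with the K\"unneth theorem over $\Q$, the map $\iota^\ast$ sends a nonzero polynomial $p(\nu_1,\dots,\nu_{g-2},e_1,\dots,e_k)$ of degree at most $\lfloor(2g-2)/3\rfloor$ to a nonzero element of
\beq
H^\ast(B\Diff(S_{g,1});\Q)\otimes\bigotimes_{j=1}^k H^\ast(B\Diff_{p_j}(S_{h_j,1});\Q),
\eeq
since the $\nu_i$-part lands in the first tensor factor and each $e_j$ in the $j$-th puncture factor, and these variables are independent across factors; a polynomial in them vanishes after restriction only if it vanishes identically. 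Hence the ring map in the statement is injective up to degree $\lfloor(2g-2)/3\rfloor$.

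The main obstacle is pinning down the two naturality statements cleanly. First, one must check that $\nu_i=(B\kappa)^\ast\mu_i$ restricts under $\iota$ to the genuine MMM class on $B\Diff(S_{g,1})$ and not merely to something cohomologous up to lower-genus corrections: this requires that the composite $B\Diff(S_{g,1})\to B\PDiff^k(Y_g)\xrightarrow{B\kappa} B\Diff(S_g)$ is, up to homotopy, the standard stabilization map $B\Diff(S_{g,1})\to B\Diff(S_g)$ (cap off the boundary), under which $\mu_i\mapsto\mu_i$ by definition of the MMM classes as fiber integrals of powers of the vertical Euler class — this is where I would be careful, using that $\kappa$ agrees with the identity on a neighborhood of $S_{g,1}$ inside $Y_g$ and that the Freudenthal-compactified bundle restricts over the image of $B\Diff(S_{g,1})$ to the universal $S_g$-bundle pulled back along stabilization. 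Second, one needs that the marked-point data is compatible: the puncture factors contribute to $H^\ast(B\PDiff^k(Y_g);\Q)$ the Euler classes $e_j$ and nothing interfering with the $\nu_i$, which follows because the $S_{h_j,1}$ are embedded disjointly from $S_{g,1}$ and from each other, so the induced map on classifying spaces factors through a product as above. Once these naturality points are in place, the argument is a direct transcription of the proof of Theorem~\ref{prop:marked-points} with ``Euler class of a large-genus subsurface'' replaced by ``MMM classes plus Euler class of the genus-$g$ subsurface, valid in the Boldsen range''.
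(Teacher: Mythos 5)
There is a genuine gap at the very first step: your embedding cannot exist. The surface $Y_g$ has total genus $g$, so once you embed a compact genus-$g$ subsurface $S_{g,1}\hookrightarrow Y_g$, its complement has genus zero; you therefore cannot choose the pieces $S_{h_j,1}$ around the marked points disjoint from $S_{g,1}$ (and from each other) with $h_j$ ``large enough'' --- they are forced to have $h_j=0$. This is fatal for the K\"unneth detection argument: on a genus-zero compact piece the relevant powers of the Euler class at the marked point die rationally (e.g.\ $B\Diff_\partial(D^2,p)$ is contractible), so $\iota^\ast$ kills every monomial involving an $e_j$ and you can no longer separate variables across factors. The trick of Theorem \ref{prop:marked-points}, where the genus $g_j$ of each puncture piece is chosen after seeing the polynomial, is exactly what is unavailable in finite genus, and splitting the genus $g$ between an MMM piece of genus $g'<g$ and positive-genus puncture pieces only yields independence up to degree $\lfloor (2g'-2)/3\rfloor$, not the claimed $\lfloor (2g-2)/3\rfloor$.

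The repair is to keep all $k$ marked points on the \emph{same} compact genus-$g$ subsurface rather than isolating them: realize $Y_g\cong S_g\setminus E$ with the end set $E$ inside an embedded disk $D^2$, put $p_1,\dots,p_k$ in $S_g\setminus\mathrm{int}(D^2)$, and use the group $\PDiff^k_\partial(S_{g,1})$ of diffeomorphisms of $S_g$ fixing $D^2$ and the marked points; extension by the identity maps it to $\PDiff^k(Y_g)$, and composing with $B\kappa$ gives (up to homotopy) the capping map $c$ to $B\mathrm{PHomeo}^k(S_g)$ --- this is the commutative triangle you correctly anticipate is needed for $\nu_i=(B\kappa)^\ast\mu_i$. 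The algebraic input is then not a cross-factor K\"unneth argument but Morita's \emph{joint} algebraic independence of $\mu_1,\dots,\mu_{g-2},e_1,\dots,e_k$ in $H^\ast(B\PDiff^k(S_g);\Q)$ up to degree $\lfloor(2g-2)/3\rfloor$ (with Boldsen's range), together with the fact that $c^\ast$ is injective on this polynomial subalgebra in the same range; commutativity of the triangle then gives injectivity on $\Q[\nu_1,\dots,\nu_{g-2},e_1,\dots,e_k]$. (Your parenthetical explanation that $\mu_{g-1}$ is ``expressible via the others'' is also not the reason the generators stop at $g-2$; that bound comes from the stated independence range in Morita's theorem.)
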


\begin{proof}
Since $Y_g$ has finite genus $g$, all of its ends are planar and it is diffeomorphic
to $S_g\setminus E$, where $E$ is the image of an embedding of $\mathrm{Ends}(Y_g)$ in $S_g$. Consider an embedded disc $D^2 \subset S_g$ containing $E$ in its interior, and take the marked points (if any) $p_1,\dots, p_k$ in $S_g\setminus \mathrm{int} (D^2)$.  
Let $\PDiff_{\partial}^k(S_{g,1})$ denote the group of diffeomorphisms of $S_g$ that fix the embedded disk $D^2$ and the marked points pointwise.  There is the following commutative diagram
\beq
\xymatrix{
& B\PDiff_{\partial}^k(S_{g,1})\ar[rr]^{\epsilon}\ar[rd]_{c}& &B\PDiff^k(Y_g) \ar[ld]^{B\kappa}\\
& &  B\mathrm{PHomeo}^k(S_g)
}
\eeq
where the horizontal map $\epsilon$ is induced by the extension by the identity homomorphism $\PDiff^k_{\partial}(S_{g,1})\to\PDiff^k(Y_g)$, and the map $c$ is induced by the inclusions (or capping homomorphisms) $\PDiff^k_\partial(S_{g,1})\hookrightarrow \PDiff^k(S_g)\hookrightarrow \mathrm{PHomeo}^k(S_g)$. By \cite[Theorem 7.5]{morita}, with the improved stability range from \cite[Theorems 1 $\&$ 2]{boldsen},  there is an injective ring homomorphism
\beq
\Q[\mu_1,\ldots,\mu_{g-2},e_1,\ldots,e_k]\to H^\ast(B\PDiff^k(S_{g});\Q),
\eeq
up to degree $\lfloor \frac{2g-2}{3}\rfloor$, and the map in cohomology induced by $c$ restricts to a monomorphism on $\Q[\mu_1,\ldots,\mu_{g-2},e_1,\ldots,e_k]$ in the same range of degrees. The claim then follows from the commutativity of the diagram.
\end{proof}
\begin{rmk} In contrast with Theorem \ref{prop:MMM},  the dual MMM classes vanish in the rational homology of the Loch Ness monster surface \cite[Theorem A]{palmer-wu-JLMS}. 
\end{rmk}
\section{Proof of Theorems \ref{thm:family1} and \ref{thm:family2}}\label{sec:nontrivial}  
The following is a ``fiber-bundle version'' of \cite[Proposition 3.1]{rita-jekel}. It gives us a criterion to prove non-triviality of the Euler class and its powers to obtain Theorems \ref{thm:family1} and \ref{thm:family2} below. 
\begin{lem}\label{lem:nontrivial-euler}
Let $S$ be an oriented smooth surface  with one marked point. If $\Diff_\ast(S)$ contains a finite cyclic subgroup $C_m$ of order $m>1$  
then  $e^n\neq 0$ in $H^{2n}(B\Diff_\ast(S))$ for all $n\geq 1$. Moreover, if  $e^n$ has finite order then  it is divisible by $m$.
\end{lem}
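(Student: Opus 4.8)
The plan is to restrict the Euler class along the classifying space of the finite subgroup $C_m$ and to exploit that the integral cohomology of $BC_m$ is cyclic of order exactly $m$ in every positive even degree; this is the bundle-theoretic counterpart of the argument in \cite[Proposition 3.1]{rita-jekel}.

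First I would isolate the linear-algebra input. Since every element of $C_m$ fixes the marked point $\ast$, differentiating at $\ast$ defines a homomorphism $\rho\colon C_m\to\GL_2^+(\R)$; after averaging a Riemannian metric over the finite group $C_m$ (equivalently, after conjugating $\rho$ into a maximal compact subgroup) we may assume $\rho$ takes values in $\SO(2)$. The key claim is that $\rho$ is \emph{injective}. Suppose some $g\in C_m\smallsetminus\{\id\}$ had $\rho(g)=dg_\ast=\id$. Then the set $W\subset S$ of points admitting a neighborhood pointwise fixed by $g$ is open and nonempty: by Bochner's linearization theorem $g$ is smoothly conjugate near $\ast$ to its linear part $dg_\ast=\id$, hence is the identity near $\ast$. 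It is also closed: if $x=\lim x_k$ with $x_k\in W$, then $g(x_k)=x_k$ forces $g(x)=x$, the finite group $\langle g\rangle$ acts near $x$ fixing $x$, and by Bochner $g$ is locally conjugate to $dg_x\in\SO(2)$; if $dg_x=\id$ then $x\in W$, while if $dg_x$ is a nontrivial rotation then $x$ is an isolated fixed point of $g$, contradicting $x_k\to x$. By connectedness of $S$ we get $W=S$, so $g=\id$, a contradiction. Hence $\rho$ identifies $C_m$ with the cyclic subgroup of $\SO(2)$ of order $m$.

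Next I would carry out the cohomological computation. The inclusion $C_m\hookrightarrow\Diff_\ast(S)$ induces a map $\iota\colon BC_m\to B\Diff_\ast(S)$, and by construction $d\circ\iota$ is homotopic to $B\rho\colon BC_m\to B\SO(2)\simeq B\GL_2^+(\R)$. Since $\rho$ is a faithful two-dimensional rotation representation, $B\rho$ classifies the complex line bundle on $BC_m\simeq B(\Z/m)$ attached to a primitive character, so the polynomial generator $E\in H^2(B\GL_2^+(\R);\Z)$ pulls back to a generator $y:=\iota^\ast e$ of $H^2(BC_m;\Z)\cong\Z/m$. As $H^\ast(BC_m;\Z)$ is generated as a ring in degree $2$, with $H^{2n}(BC_m;\Z)\cong\Z/m$ generated by $y^n$ for every $n\geq1$, it follows that $\iota^\ast(e^n)=y^n$ has order exactly $m$.

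The conclusions are then immediate: $\iota^\ast(e^n)\neq0$ gives $e^n\neq0$ in $H^{2n}(B\Diff_\ast(S))$ for all $n\geq1$; and if $e^n$ has finite order $N$, then $N\cdot y^n=\iota^\ast(N\cdot e^n)=0$ in $\Z/m$, forcing $m\mid N$, i.e.\ $m$ divides the order of $e^n$. I expect the only genuine obstacle to be the injectivity of $\rho$ — ruling out a nontrivial periodic diffeomorphism with trivial $1$-jet at $\ast$ — which is where Bochner linearization (or the classical local normal form for periodic surface homeomorphisms at a fixed point) together with connectedness of $S$ enters; the remaining steps are routine manipulations with the cohomology of cyclic groups.
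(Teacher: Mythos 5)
Your proof is correct and follows essentially the same route as the paper's: restrict $e$ along $BC_m\to B\Diff_\ast(S)$, identify the restriction with the first Chern class of a line bundle given by a faithful character of $C_m$, and conclude via the ring isomorphism $H^*(BC_m;\Z)\cong\Z[y]/(my)$, so that $y^n$ generates $H^{2n}(BC_m;\Z)\cong\Z/m$. The only real difference is how the faithfulness of the derivative representation at $\ast$ is established: the paper invokes Epstein's theorem to realize the generator as an isometry of a complete hyperbolic metric (forcing $df_\ast$ to be a primitive rotation), whereas you prove injectivity directly by averaging a metric and applying Bochner linearization plus an open--closed argument, which is equally valid and somewhat more elementary.
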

\begin{proof}
Let $f:S\to S$ be the generator of $C_m<\Diff_\ast(S)$. By \cite[Theorem 2.8]{epstein-periodic} there exists a complete hyperbolic metric on $S$ such that $f$ is an isometry. In particular, at the neighborhood of the fixed point $\ast\in X$, the derivative $df_*$ is a rotation by $2\pi/m$. 
Therefore the pullback of the universal oriented plane bundle over $B\GL_2^+(\R)$ along $d:BC_m\to B\GL_2^+(\R)$ is isomorphic to the bundle $EC_m\times_{C_m}\R^2\to BC_m$, where $C_m$ acts on $\R^2\cong T_\ast S$ via the derivative of $f$ at $\ast$. The Euler class of this bundle is the pullback of the universal Euler class $E\in H^2(B\GL_2^+(\R))$ under the derivative map. Thus it suffices to show that $d^*E$ is the generator of the group $H^2(BC_m)\cong C_m$, because then one uses that there is a ring isomorphism $H^*(BC_m)\cong\Z[x]/\langle mx\rangle$.

Since the action of $C_m$ by rotation is equivalent to complex multiplication by $\omega=e^{2\pi i/m}$ when $\R^2$ is identified with $\C$, this vector bundle becomes a complex line bundle $L\to BC_m$ whose first Chern class $c_1(L)\in H^2(BC_m)\cong H^2(C_m)$ corresponds, under the isomorphism $\Hom(C_m,U(1))\cong H^2(C_m)$, to the representation $f_m:C_m\to U(1)$ given by multiplication by $\omega$. Recall that the aforementioned isomorphism is given by sending a representation $f:C_m\to U(1)$ to the pullback of the central extension $0\to\Z\to\R\xto{\exp} U(1)\to 0$ along $f$. The generator corresponds to the representation whose pullback is the extension
\beq
0\to\Z\xto{\cdot m}\Z\to C_m\to 0,
\eeq
that is to $f_m$. Therefore $c_1(L)=d^*E$ is the generator of $H^2(BC_m)$.
\end{proof}
\begin{cor}\label{coro:criterion-allorders}
Let $\{m_i\}$ be an increasing sequence of natural numbers. If $\Diff_\ast(S)$ contains finite cyclic subgroups of order $m_i$ for all $i=1,2,\ldots$, then $e^n$ has infinite order in $H^{2n}(B\Diff_\ast(S))$.
\end{cor}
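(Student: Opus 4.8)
The plan is to obtain this as a purely formal consequence of Lemma \ref{lem:nontrivial-euler} together with the elementary fact that an increasing sequence of natural numbers is unbounded. Fix $n\geq 1$. Since the sequence $\{m_i\}$ is increasing we may discard finitely many terms and assume $m_i>1$ for all $i$; then Lemma \ref{lem:nontrivial-euler}, applied to any single subgroup $C_{m_1}<\Diff_\ast(S)$, already yields $e^n\neq 0$ in $H^{2n}(B\Diff_\ast(S))$. It therefore only remains to rule out that $e^n$ is a torsion class.

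I would argue by contradiction: suppose $e^n$ has finite order $d$. For each $i$, the inclusion $C_{m_i}<\Diff_\ast(S)$ induces a restriction map $H^{2n}(B\Diff_\ast(S))\to H^{2n}(BC_{m_i})\cong\Z/m_i$, and by the computation carried out in the proof of Lemma \ref{lem:nontrivial-euler} the class $e$ restricts to a generator $x$ of $H^2(BC_{m_i})$, so $e^n$ restricts to $x^n$, which has additive order exactly $m_i$ inside $H^*(BC_{m_i})\cong\Z[x]/(m_i x)$. Hence $m_i$ divides the order of $e^n$, that is $m_i\mid d$, for every $i$. Since $\{m_i\}$ is unbounded, no finite $d$ can be divisible by all the $m_i$, and this contradiction shows that $e^n$ has infinite order.

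I do not expect any genuine obstacle here — the statement is essentially a repackaging of the ``moreover'' clause of Lemma \ref{lem:nontrivial-euler}. The only point that deserves a word of care is that the divisibility conclusion of that lemma is available \emph{simultaneously} for every $m_i$; but this is immediate, because the lemma's hypothesis requires only the existence of a cyclic subgroup of the relevant order, and such subgroups exist for all $m_i$ by assumption.
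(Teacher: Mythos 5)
Your argument is correct and follows the same route as the paper: apply the divisibility clause of Lemma \ref{lem:nontrivial-euler} to each cyclic subgroup $C_{m_i}$ and conclude that a finite order of $e^n$ would be divisible by the unbounded sequence $\{m_i\}$, a contradiction. The extra unpacking of the restriction to $H^{2n}(BC_{m_i})\cong\Z/m_i$ is just a re-derivation of that clause, so nothing essentially new is needed or added.
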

\begin{proof}
If the order of $e^n$ were finite, then by Lemma \ref{lem:nontrivial-euler}, the infinitely many positive integers $m_i$ would divide its order, which is impossible.
\end{proof}
\subsection{Surfaces of genus zero}\label{sec:genus zero} In this section we prove Theorem \ref{thm:family1}, which is stated as Theorem \ref{prop:binary tree surfaces}. We consider a family of genus zero surfaces  (c.f. \cite[Section 1.2]{palmer-wu-homology})  for which all powers of their Euler class have infinite order. 

Let $\mathfrak{B}$ denote the infinite binary tree, and consider a connected smooth surface $\Sigma$ without boundary. For each vertex $v$ of $\mathfrak{B}$, let $\Sigma_v$  be the surface obtained by removing $\deg(v)$ disjoint closed disks of $\Sigma$, and equipped with a labelling of its boundary circles by the edges of $\mathfrak{B}$ incident to $v$. The \emph{binary tree surface associated to $\Sigma$} is the surface
$$\mathrm{Gr}_{\mathfrak{B}}(\Sigma):=\bigsqcup_v \Sigma_v\big/\sim$$
 where if $e$ is an edge of $\mathfrak{B}$ incident to $v$ and $w$, we glue the boundary circle of $\Sigma_v$ labelled by $e$ to the boundary circle of  $\Sigma_w$ labelled by $e$ by an orientation-preserving diffeomorphism.
 
Notice that the  surface $\mathrm{Gr}_{\mathfrak{B}}(\Sigma)$ is a surface without boundary that has either genus zero or infinity.  In particular, $\mathrm{Gr}_{\mathfrak{B}}(S^2)$ is  the \emph{Cantor tree surface} $S^2\setminus \mathcal{C}$  and $\mathrm{Gr}_{\mathfrak{B}}(T^2)$  is the \emph{Cantor blooming tree surface}. As pointed out in \cite[Remark 1.7]{palmer-wu-JLMS} there are uncountably many non-homeomorphic (and hence non-diffeomorphic) surfaces  $\mathrm{Gr}_{\mathfrak{B}}(\Sigma)$ of genus zero.
\begin{thm}\label{prop:binary tree surfaces}
Let $\Sigma$ be an oriented smooth surface, and  $\mathrm{Gr}_{\mathfrak{B}}(\Sigma)$ its associated binary tree surface. Then the Euler class $e$ and all its powers have infinite order in the cohomology of $B\Diff_\ast(\mathrm{Gr}_{\mathfrak{B}}(\Sigma))$.
\end{thm}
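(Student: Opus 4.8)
The plan is to reduce to Corollary \ref{coro:criterion-allorders}: it suffices to produce, for every $m\geq 2$, a finite cyclic subgroup $C_m$ of order $m$ inside $\Diff_\ast(\mathrm{Gr}_{\mathfrak{B}}(\Sigma))$, since $\{2,3,4,\dots\}$ is an increasing sequence of natural numbers. Because all points of a connected surface lie in a single $\Diff(\mathrm{Gr}_{\mathfrak{B}}(\Sigma))$-orbit, conjugating by a suitable diffeomorphism shows this is equivalent to: realize $\mathrm{Gr}_{\mathfrak{B}}(\Sigma)$, up to diffeomorphism, as a surface carrying a smooth orientation-preserving action of $C_m$ with a fixed point. (Recall from Section \ref{sec:background} that homeomorphic surfaces are diffeomorphic, so it is enough to work up to homeomorphism; note also that $\mathrm{Gr}_{\mathfrak{B}}(\Sigma)$ is always of infinite type.) Once we have such a $C_m$, Lemma \ref{lem:nontrivial-euler} and Corollary \ref{coro:criterion-allorders} do the rest.

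First I would set up the self-similar model. Let $W$ denote the \emph{one-holed binary tree surface on $\Sigma$}, obtained by cutting $\mathrm{Gr}_{\mathfrak{B}}(\Sigma)$ along an essential separating circle coming from an edge of $\mathfrak{B}$; each of the two pieces is a copy of $W$, so $\mathrm{Gr}_{\mathfrak{B}}(\Sigma)\cong W\cup_{\partial}W$, and $W$ itself is self-similar, decomposing as a compact several-holed copy of $\Sigma$ with copies of $W$ glued to all but one of its boundary circles (e.g. $W\cong(\Sigma\setminus 3D^2)\cup W^{\sqcup 2}$). Now fix $m\geq 2$ and let $P_m=S^2\setminus(m\text{ open round disks})$, with the disks placed symmetrically so that rotation $r$ by $2\pi/m$ about a suitable axis fixes a point $\ast\in P_m$ and cyclically permutes the $m$ boundary circles. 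Gluing a copy of $W$ to each boundary circle of $P_m$, with the gluings obtained from one model gluing by applying the powers of $r$, the rotation $r$ extends (using collars to arrange smoothness) to a smooth orientation-preserving action of $C_m$ on $N_m:=P_m\cup W^{\sqcup m}$ fixing $\ast$.

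It then remains to show $N_m\cong\mathrm{Gr}_{\mathfrak{B}}(\Sigma)$, which I would do via Richards' classification, Theorem \ref{thm:richards}. For the genus: $W$ has genus $0$ when $\Sigma$ does and infinite genus otherwise (it contains infinitely many disjoint copies of $\Sigma$ minus disks), so both $N_m$ and $\mathrm{Gr}_{\mathfrak{B}}(\Sigma)$ have genus in $\{0,\infty\}$, matching that of $\Sigma$. For the ends: since the gluing loci are compact, one gets homeomorphisms of pairs (remembering the non-planar ends) $\operatorname{Ends}(N_m)\cong\operatorname{Ends}(W)^{\sqcup m}$ and $\operatorname{Ends}(\mathrm{Gr}_{\mathfrak{B}}(\Sigma))\cong\operatorname{Ends}(W)^{\sqcup 2}$, and these two spaces agree because $\operatorname{Ends}(W)$ absorbs finite disjoint unions of copies of itself — a consequence of the self-similarity identity $\operatorname{Ends}(W)\cong\operatorname{Ends}(\Sigma)\sqcup\operatorname{Ends}(W)^{\sqcup 2}$, which presents $\operatorname{Ends}(W)$ as a Cantor-scheme-like space in which small clopen copies of $\operatorname{Ends}(W)$ accumulate everywhere, exactly as a Cantor set absorbs finite disjoint unions of Cantor sets. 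Transporting the $C_m$-action through a homeomorphism $N_m\cong\mathrm{Gr}_{\mathfrak{B}}(\Sigma)$ yields $C_m<\Diff_\ast(\mathrm{Gr}_{\mathfrak{B}}(\Sigma))$, and since this holds for all $m\geq 2$, Corollary \ref{coro:criterion-allorders} gives that $e^n$ has infinite order in $H^{2n}(B\Diff_\ast(\mathrm{Gr}_{\mathfrak{B}}(\Sigma)))$ for every $n\geq 1$.

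The main obstacle I anticipate is precisely the end-space absorption $\operatorname{Ends}(W)^{\sqcup m}\cong\operatorname{Ends}(W)$ for arbitrary $\Sigma$: when $\Sigma$ is of finite type this is immediate (both end spaces are Cantor sets, with all ends non-planar exactly when $\Sigma$ has positive genus), but when $\operatorname{Ends}(\Sigma)$ is itself complicated it has to be extracted carefully from the self-similar description of $\operatorname{Ends}(W)$. Equivalently, and perhaps more symmetrically, one can take $N_m$ to be the $m$-fold cyclic cover of $\mathrm{Gr}_{\mathfrak{B}}(\Sigma)$ branched over $\ast$ — which automatically carries a deck action of $C_m$ fixing the branch point — and verify that it is again homeomorphic to $\mathrm{Gr}_{\mathfrak{B}}(\Sigma)$; the end-absorption input needed is the same.
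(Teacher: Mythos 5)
Your strategy is the same as the paper's: exhibit finite cyclic subgroups of arbitrarily large order in $\Diff_\ast(\mathrm{Gr}_{\mathfrak{B}}(\Sigma))$ with a fixed point and invoke Lemma \ref{lem:nontrivial-euler} and Corollary \ref{coro:criterion-allorders}. The difference is in the symmetric model and in how the case analysis goes: the paper disposes of positive-genus $\Sigma$ at once via Theorem \ref{prop:marked-points} (then $\mathrm{Gr}_{\mathfrak{B}}(\Sigma)$ has infinite genus), and for genus-zero $\Sigma$ it builds $\mathrm{Gr}^n_{\mathfrak{B}}(\Sigma)$ by placing spheres-minus-disks at all vertices of depth $\leq n$, which visibly carries an order-$2^n$ diffeomorphism with a fixed point; your model $N_m=P_m\cup W^{\sqcup m}$ treats all $\Sigma$ uniformly and produces every order $m$ (more than Corollary \ref{coro:criterion-allorders} needs, but harmless). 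In both arguments the substantive point is that the symmetric model is diffeomorphic to $\mathrm{Gr}_{\mathfrak{B}}(\Sigma)$; the paper asserts this ``by the classification of surfaces'', while you correctly reduce it, via Theorem \ref{thm:richards}, to an end-space absorption statement, essentially $\operatorname{Ends}(W)^{\sqcup m}\cong\operatorname{Ends}(W)$ and $\operatorname{Ends}(W)\sqcup\operatorname{Ends}(\Sigma)\cong\operatorname{Ends}(W)$ as pairs (planar, non-planar).

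The one step that is not yet a proof is exactly the one you flag, and your proposed justification for it is insufficient as stated: absorption is \emph{not} a formal consequence of the self-similarity identity $\operatorname{Ends}(W)\cong\operatorname{Ends}(\Sigma)\sqcup\operatorname{Ends}(W)^{\sqcup 2}$ --- iterating it only gives $\operatorname{Ends}(W)\cong\operatorname{Ends}(\Sigma)^{\sqcup(k-1)}\sqcup\operatorname{Ends}(W)^{\sqcup k}$, never the cancellation you need, and for abstract compact zero-dimensional spaces such identities genuinely fail to imply absorption (Schr\"oder--Bernstein/cube-cancellation phenomena fail for Stone spaces). What saves the claim is the concrete tree structure: the clopen copies of $\operatorname{Ends}(W)$ and of $\operatorname{Ends}(\Sigma)$ hanging off a fixed ray of $\mathfrak{B}$ form a sequence of pairwise disjoint clopen sets converging to a single tree-end, so the ``infinite shift'' along that ray (send the extra clopen piece to the first one, the first to the second, and so on, identity elsewhere) is a homeomorphism of pairs; this gives both absorption statements, hence $N_m\cong\mathrm{Gr}_{\mathfrak{B}}(\Sigma)$ by Theorem \ref{thm:richards}, and then a diffeomorphism since homeomorphic surfaces are diffeomorphic. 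With that inserted your argument is complete. Two minor corrections: for noncompact finite-type $\Sigma$ the end spaces are not Cantor sets (they have dense isolated points accumulating on a Cantor set), although absorption still holds; and with the paper's convention the root of $\mathfrak{B}$ has degree two, so cutting along an edge circle gives one copy of $W$ and one piece containing the root, so $\operatorname{Ends}(\mathrm{Gr}_{\mathfrak{B}}(\Sigma))\cong\operatorname{Ends}(\Sigma)\sqcup\operatorname{Ends}(W)^{\sqcup 2}$ --- again harmless after the same absorption.
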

\begin{proof} 
If $\Sigma$ has positive genus, the result is covered by Theorem \ref{thm:pure-infinite}. Assume $\Sigma$ has genus zero, and let $n$ be a positive integer. Label the vertex set of the infinite binary tree $\mathfrak{B}$ with $V=\{\phi\}\sqcup\{\text{finite sequences of $0$ and $1$'s}\}$. 
For each vertex $v\in V$ (including $\phi$) of length $\leq n$, let $\Sigma_v$  the surface obtained by removing $\deg(v)$ disjoint closed disks of the sphere $S^2$, and equipped with a labelling of its boundary circles by the edges of $\mathfrak{B}$ incident to $v$. 
And for each vertex $v\in V$ of length $> n$, let $\Sigma_v$   the surface obtained by removing $\deg(v)$ disjoint closed disks of $\Sigma$, and equipped with a labelling of its boundary circles by the edges of $\mathfrak{B}$ incident to $v$. Let  
$$\mathrm{Gr}^n_{\mathfrak{B}}(\Sigma):=\bigsqcup_{v\in V} \Sigma_v\big/ \sim$$
 where if $e$ is an edge of $\mathfrak{B}$ incident to $v$ and $w$, we glue the boundary circle of $\Sigma_v$ labelled by $e$ to the boundary circle of  $\Sigma_w$ labelled by $e$ by an orientation-preserving diffeomorphism. From the construction, one sees that there is an order $2^n$ diffeomorphism of $\mathrm{Gr}^n_{\mathfrak{B}}(\Sigma)$ with a fixed point. By the classification of surfaces $\mathrm{Gr}^n_{\mathfrak{B}}(\Sigma)$ is diffeomorphic to $\mathrm{Gr}_{\mathfrak{B}}(\Sigma)$. Corollary \ref{coro:criterion-allorders} implies that the Euler class and all its powers are nontrivial. 
\end{proof}
\subsection{Surfaces of finite positive genus}\label{sec:finite genus} Now let $E\subset\calC$ be a closed subset of the Cantor set which is realized as the space of ends of a genus $g\geq 1$ orientable surface $Y_g$.
\begin{prop}\label{prop:Yg}
Let $\ast\in Y_g$ be a marked point and $n$ a positive integer. If $n\geq g\geq 1$ and $e^n\neq 0$, then
$e^n$ is a torsion class whose order is  bounded above by  a multiple of $4g(2g+1)$.
\end{prop}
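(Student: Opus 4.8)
The plan is to deduce everything from the closed-surface case by means of the fibrewise Freudenthal compactification. Since $Y_g$ has finite genus $g$, all of its ends are planar, so the surface $\widehat{Y_g}$ obtained by filling them in is the closed orientable surface $S_g$, and Section~\ref{sec:extension} furnishes a continuous homomorphism $\kappa\colon\Diff_\ast(Y_g)\to\Homeo_\ast(S_g)$. Writing $e_{S_g}\in H^2(B\Diff_\ast(S_g);\Z)\cong H^2(B\Homeo_\ast(S_g);\Z)$ for the Euler class of the universal $S_g$-bundle with a section (the two groups being identified via the smoothing-theory weak equivalence of Section~\ref{sec:cohomlogy-bigmcg}), the crucial point is the identity
\beq
e=(B\kappa)^\ast e_{S_g}\in H^2(B\Diff_\ast(Y_g);\Z).
\eeq
For $g\geq 2$ this is Proposition~\ref{prop:euler-freudenthal}: the class $(B\eta_{\widehat S})^\ast E$ appearing there is, by Proposition~\ref{prop:euler-classes-coincide} applied to $S_g$ together with the fact that the Nielsen action of $\Homeo_\ast(S_g)$ restricts to that of $\Diff_\ast(S_g)$, precisely $e_{S_g}$, and Proposition~\ref{prop:euler-freudenthal} identifies its $(B\kappa)^\ast$-pullback with $e$. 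For $g=1$ the surface $\widehat{Y_1}=T^2$ is not hyperbolic, so Proposition~\ref{prop:euler-freudenthal} does not literally apply; there I would run the same blow-up argument proving Propositions~\ref{prop:euler-classes-coincide} and~\ref{prop:euler-freudenthal}, using the flat universal cover $\R^2\to T^2$ and its circle of directions at infinity in place of $\Hyp\to S_g$ and $\partial_\infty\Hyp$, or alternatively invoke the identification $B\Diff_\ast(T^2)\simeq B\SL_2(\Z)$ directly.

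Granting the identity, the rest is immediate. Raising to the $n$-th power gives $e^n=(B\kappa)^\ast e_{S_g}^{\,n}$ in $H^{2n}(B\Diff_\ast(Y_g);\Z)$. By \cite[Theorem A]{rita-jekel}, \cite[Theorem A]{rita-jekel2}, for $n\geq g\geq 1$ the class $e_{S_g}^{\,n}$ generates a finite cyclic subgroup of $H^{2n}(B\Diff_\ast(S_g);\Z)$; let $N$ denote its order, so that $N$ is a multiple of $4g(2g+1)$. Then $N\cdot e^n=(B\kappa)^\ast(N\cdot e_{S_g}^{\,n})=0$, so $e^n$ is torsion; and since $e^n\neq 0$ by hypothesis, the order of $e^n$ is a divisor of $N$, hence at most $N$, a multiple of $4g(2g+1)$. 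This is the asserted bound.

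The essential content is the pullback identity $e=(B\kappa)^\ast e_{S_g}$ — that the Euler class of a finite-genus infinite-type surface bundle is pulled back along the fibrewise Freudenthal compactification — and this is already supplied by Proposition~\ref{prop:euler-freudenthal}, so I do not anticipate a serious obstacle. The one genuinely delicate point will be the non-hyperbolic case $g=1$; a secondary bookkeeping issue is checking that $\kappa(f)\in\Homeo_\ast(S_g)$ agrees with $f$ on a neighbourhood of $\ast$ (so that it has the same derivative at $\ast$, which is what makes the blow-up argument apply), and this is clear from the construction in Section~\ref{sec:extension} since $\ast$ is an interior point of $Y_g$ bounded away from the ends.
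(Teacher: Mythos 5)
Your proposal is correct and follows essentially the same route as the paper: identify $e$ with the pullback of the closed-surface Euler class along the fibrewise Freudenthal compactification via Proposition~\ref{prop:euler-freudenthal}, then invoke \cite[Theorem A]{rita-jekel} for $n\geq g\geq 1$. Your extra care with the case $g=1$ (where $\widehat{Y_1}=T^2$ is not Nielsen-convex, so Proposition~\ref{prop:euler-freudenthal} does not literally apply) is a legitimate refinement that the paper's own two-line proof passes over silently.
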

\begin{proof}
Recall that by Proposition \ref{prop:euler-freudenthal} the Euler class $e=d^*E$ coincides with the pullback of the Euler class in $H^2(B\Diff_\ast(S_g))$ along the map induced by the Freudenthal compactification $\kappa:\Diff_\ast(Y_g)\to\Homeo_\ast(S_g)$.  Then the result follows from \cite[Theorem A]{rita-jekel}.
\end{proof}
Except for the following special case, we do not know whether the $e^n\neq 0$ when $n\geq g\geq 4$ for a general surface $Y_g$. We obtain  Theorem \ref{thm:family2} restated as follows. 
\begin{thm}\label{prop:nontrivialXg}
Let $X_g=S_g\setminus\mathcal{C}$, where $S_g$ is a closed oriented surface of genus $g\geq 1$ and $\mathcal{C}$ is a Cantor set embedded in $S_g$. The Euler class $e\in H^2(B\Diff_\ast(X_g))$ and all its powers are nontrivial. Furthermore, for $g\geq 3n+1$ the class $e^n$ has infinite order, and for $n\geq g$ the power $e^n$ has finite order bounded above by a multiple of $4g(2g+1)$.
\end{thm}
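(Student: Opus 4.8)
The plan is to split the statement into its three claims --- (a) $e^{n}\neq 0$ in $H^{2n}(B\Diff_\ast(X_g))$ for every $n\geq 1$, (b) $e^{n}$ has infinite order when $g\geq 3n+1$, and (c) $e^{n}$ is torsion of order at most a multiple of $4g(2g+1)$ when $n\geq g$ --- and to deduce each from a result already established earlier in the paper. The only non-formal ingredient is a construction, needed for (a), of a nontrivial finite-order diffeomorphism of $X_g$ fixing the marked point.

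For (a) I would argue as follows. Fix a smooth orientation-preserving involution $\sigma$ of $S_g$ with nonempty fixed-point set; concretely, realize $S_g$ as a smooth two-fold branched cover $p\colon S_g\to S^{2}$ with branch locus a set $B$ of $2g+2$ points and take $\sigma$ to be the deck involution, so that $\Fix(\sigma)=p^{-1}(B)\neq\es$. Choose a Cantor set $\calC_{0}\subset S^{2}\setminus B$ and put $\calC:=p^{-1}(\calC_{0})$. Since a two-fold cover of a Cantor set is again a Cantor set, $\calC$ is a $\sigma$-invariant Cantor set in $S_g$ disjoint from $\Fix(\sigma)$, and therefore $\sigma$ restricts to an orientation-preserving diffeomorphism $\tau$ of order $2$ of $S_g\setminus\calC$ fixing every point of $p^{-1}(B)$. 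As $S_g\setminus\calC$ has genus $g$ and Cantor space of ends, Theorem \ref{thm:richards} (together with the fact that homeomorphic surfaces are diffeomorphic) lets us identify it with $X_g$ and a fixed point of $\tau$ with $\ast$. Hence $C_{2}=\langle\tau\rangle<\Diff_\ast(X_g)$, and Lemma \ref{lem:nontrivial-euler} yields $e^{n}\neq 0$ for all $n\geq 1$.

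Claims (b) and (c) are then short. For (b), apply Theorem \ref{prop:MMM} with $k=1$: there is a ring homomorphism $\Q[\nu_{1},\dots,\nu_{g-2},e_{1}]\to H^{*}(B\Diff_\ast(X_g);\Q)$ that is injective up to degree $\lfloor\tfrac{2g-2}{3}\rfloor$, and $e_{1}=e$; since $2n\leq\lfloor\tfrac{2g-2}{3}\rfloor$ precisely when $g\geq 3n+1$, in that range $e^{n}$ is nonzero rationally and hence of infinite order integrally (the inequality $g\geq 3n+1$ forces $g\geq 4$, so the hypothesis $g\geq 2$ of Theorem \ref{prop:MMM} holds). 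For (c), observe that $X_g$ has finite genus $g\geq 1$ with Cantor space of ends, so Proposition \ref{prop:Yg} applies; combined with the nonvanishing $e^{n}\neq 0$ from (a) for $n\geq g$, it shows that $e^{n}$ is torsion of order bounded above by a multiple of $4g(2g+1)$.

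The one step requiring genuine care is the construction in (a): one must exhibit an honest finite-order diffeomorphism of $X_g$ with a fixed point and be sure that the model built by hand is diffeomorphic to $X_g$ --- this is exactly where the classification of infinite-type surfaces enters, via Theorem \ref{thm:richards} and the (classical) fact that removing a Cantor set from a closed surface leaves a surface with Cantor space of ends. After that, everything is a direct application of Lemma \ref{lem:nontrivial-euler}, Theorem \ref{prop:MMM}, and Proposition \ref{prop:Yg}.
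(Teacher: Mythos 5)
Your proposal is correct and follows essentially the same route as the paper: produce an order-$2$ element of $\Diff_\ast(X_g)$ from the hyperelliptic involution with a $\sigma$-invariant Cantor set removed (the paper removes $\calC\sqcup\calC$ symmetrically near a free orbit, you pull back a Cantor set under the branched double cover $S_g\to S^2$ --- the same idea), then apply Lemma \ref{lem:nontrivial-euler}, Theorem \ref{prop:MMM}, and Proposition \ref{prop:Yg}. Your explicit verification of the infinite-order claim via the range $2n\leq\lfloor\frac{2g-2}{3}\rfloor$ is a welcome detail that the paper's proof leaves implicit.
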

\begin{proof}
By Lemma \ref{lem:nontrivial-euler}, to show the first part we only need to  exhibit one element of finite order in $\Diff_\ast(X_g)$. 
Note that  the Cantor set $\calC$ is homeomorphic to the $2$-fold disjoint union $\calC\sqcup
\calC$.  Now let the cyclic group $C_2$ of order $2$ act on the closed orientable surface of genus $g\geq 1$ by the hyperelliptic involution. By removing symmetrically a Cantor set $\calC\sqcup\calC$ in neighborhoods of a free orbit, we obtain an action of $C_2$ on $X_g$ with a fixed point (in fact, with $2g+2$ fixed points). This shows that the Euler class and all its powers are nontrivial. Now apply Proposition \ref{prop:Yg}.
\end{proof}
\begin{rmk}
A consequence of Corollary \ref{coro:criterion-allorders} and Theorem \ref{prop:nontrivialXg} is that the order of the torsion elements in $\Diff_{\ast}(X_g)$ must be uniformly bounded. In fact, it can be shown that any torsion element of $\Map_{\ast}(Y_g)$ has order at most $4g+2$. 
\end{rmk}
For the surfaces $X_g$ with $g\geq 3$, there is the following alternative way to prove that the Euler class has infinite order.
\begin{prop}\label{prop:infinite-euler}
If $g\geq 3$, then the derivative map induces an injective homomorphism
\beq
d^\ast:\Z\cong H^2(B\GL_2^+(\R))\to H^2(B\Diff_\ast(X_g)).
\eeq
Consequently $e=d^*E\in H^2(B\Diff_\ast(X_g))$ has infinite order.
\end{prop}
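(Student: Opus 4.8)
The plan is to detect $e$ by restricting to a finite-type subsurface of genus $g$, thereby reducing the statement to the (well understood) degree-two cohomology of mapping class groups, in the spirit of the proof of Theorem~\ref{prop:MMM}. Write $X_g = S_g\setminus\calC$ and choose a closed embedded disk $D\subset S_g$ with $\calC\subset\interior D$ and $\ast\notin D$. Set $\Sigma := S_g\setminus\interior D$, a compact surface of genus $g$ with a single boundary circle $\partial D$, having $\ast$ in its interior; then $\Sigma\subset X_g$, and the complementary piece $D\setminus\calC$ is a one-holed ``Cantor tree'' surface, glued to $\Sigma$ along $\partial D$. Extending diffeomorphisms of $\Sigma$ that fix $\partial D$ pointwise and fix $\ast$ by the identity on $D\setminus\calC$ yields a continuous homomorphism $\Diff_{\partial,\ast}(\Sigma)\to\Diff_\ast(X_g)$, hence a map $\iota\colon B\Diff_{\partial,\ast}(\Sigma)\to B\Diff_\ast(X_g)$. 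Since the derivative at $\ast$ depends only on the germ of a diffeomorphism at $\ast$, the composite $d\circ\iota$ equals the derivative map $d_\Sigma$ of $\Sigma$ at $\ast$, whence $\iota^\ast e = d_\Sigma^\ast E =: e_\Sigma$.

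Because $\chi(\Sigma)<0$, the identity component of $\Diff_{\partial,\ast}(\Sigma)$ is contractible, so $B\Diff_{\partial,\ast}(\Sigma)\simeq B\Gamma_{g,1}^{1}$, where $\Gamma_{g,1}^{1}$ is the mapping class group of a genus-$g$ surface with one boundary component and one marked point; under this identification $e_\Sigma$ is, up to sign, the tautological $\psi$-class of the marked point. It therefore suffices to prove that $\psi\neq 0$ in $H^2(B\Gamma_{g,1}^{1};\Q)$ for every $g\ge 3$: granting this, $\iota^\ast e = e_\Sigma\neq 0$ in $H^2(B\Diff_\ast(X_g);\Q)$, so $e$ has infinite order in $H^2(B\Diff_\ast(X_g);\Z)$, which is exactly the injectivity of $d^\ast\colon\Z\cong H^2(B\GL_2^+(\R))\to H^2(B\Diff_\ast(X_g))$. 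For the non-vanishing I would appeal to the computation of the degree-two cohomology of mapping class groups of surfaces with boundary and punctures: in the range $g\ge 3$ the space $H^2(B\Gamma_{g,1}^{1};\Q)$ is the degree-two part of the tautological algebra $\Q[\kappa_1,\psi]$, in which $\kappa_1$ and $\psi$ are linearly independent, so in particular $\psi\neq 0$ (this can be extracted from Harer homological stability in Boldsen's range \cite{boldsen} together with the Madsen--Weiss theorem, or directly from the known low-degree computations of $H_2$ of mapping class groups).

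The main obstacle is precisely this last input in the borderline case $g=3$. Morita's algebraic-independence statement with Boldsen's improved stability range \cite{morita,boldsen} only places degree two inside the injectivity range once $\lfloor(2g-2)/3\rfloor\ge 2$, i.e.\ $g\ge 4$; so for $g=3$ one cannot simply quote that result and must instead rely on the exact value of $H^2$ of the relevant mapping class group. Two points there deserve care: that this $H^2$ really is two-dimensional with basis $\{\kappa_1,\psi\}$ already at $g=3$ (and not merely ``eventually''), and that the class $e_\Sigma$ coincides with $\pm\psi$ rather than with some linear combination that might happen to vanish. Once these are checked, the argument above goes through and yields the sharper bound $g\ge 3$ advertised in the statement; note also that the same subsurface restriction, combined with the infinite order of $e$ in $H^2(B\Diff_\ast(S_g);\Z)$ for $g\ge 2$, is consistent with (but does not by itself recover) the case $g=3$, which is why the $H^2$-computation is needed.
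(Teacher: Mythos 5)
Your route is genuinely different from the paper's, and its skeleton is sound: restricting to a compact genus-$g$ subsurface $\Sigma\subset X_g$ with one boundary circle (Cantor set pushed into the complementary disk), extending by the identity, and observing that $\iota^*e$ is the Euler/$\psi$-class on $B\Diff_{\partial,\ast}(\Sigma)\simeq B\Gamma_{g,1}^1$ are all correct, since the derivative at $\ast$ depends only on the germ there and the identity component of $\Diff_{\partial,\ast}(\Sigma)$ is contractible. For $g\ge 4$ the required rational nonvanishing of $\psi$ does follow from Morita's independence theorem in Boldsen's range \cite{morita,boldsen} (degree $2\le\lfloor(2g-2)/3\rfloor$), the same input the paper uses in Theorem~\ref{prop:MMM}, so your argument proves the proposition for $g\ge 4$.

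The genuine gap is the borderline case $g=3$, which you flag but do not close, and it is not a routine citation: Morita--Boldsen says nothing in degree $2$ when $g=3$; the point-pushing argument that detects $\psi$ on $\Gamma_g^1$ (pullback of the vertical tangent bundle to $B\pi_1(S_g)=S_g$ has Euler number $2-2g$) is unavailable for $\Gamma_{g,1}^1$ because the point-pushing subgroup of a surface with boundary is free, hence has trivial $H^2$; and $\Gamma_{g,1}^1$ is torsion-free, so the finite-cyclic-subgroup criterion of Lemma~\ref{lem:nontrivial-euler} cannot be run inside your subgroup either. So you genuinely need a genus-$3$-specific statement that $\psi\neq 0$ in $H^2(B\Gamma_{3,1}^1;\Q)$ (e.g.\ extracted from the known computations of $H_2$ of genus-$3$ mapping class groups with marked points together with an identification of generators), and as written this is asserted rather than proved. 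The paper sidesteps rational genus-$3$ cohomology entirely: it identifies $B\Diff_\ast(X_g)\simeq B\Homeo(S_g^1,\calC)$ (via Section~\ref{sec:cohomlogy-bigmcg} and \cite{palmer-wu-Documenta}), invokes Calegari--Chen \cite{CalegariChen2022} to get $H_1(B\Homeo(S_g^1,\calC))\cong H_1(B\Homeo(S_g^1))=0$ for $g\ge 3$ \cite{HarerSecond}, concludes by universal coefficients that $H^2(B\Diff_\ast(X_g);\Z)$ is torsion-free, and then feeds in the integral nonvanishing $e\neq 0$ already established in Theorem~\ref{prop:nontrivialXg} through finite cyclic subgroups. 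Either supply a precise reference or argument for the $g=3$ input, or restrict your claim to $g\ge 4$.
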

\begin{proof}
Let $S^1_g=S_g\setminus\ast$. There is an isomorphism between $\Diff_\ast(X_g)$ and the group  $\Diff(S_g^1\setminus \calC)$, and therefore a homotopy equivalence $B\Diff_\ast(X_g)\cong B\Diff(S_g^1\setminus \calC)$. On the other hand, by the discussion in Section \ref{sec:cohomlogy-bigmcg} there is a weak homotopy equivalence $B\Diff(S_g^1\setminus \calC)\cong B\Homeo(S_g^1\setminus \calC)$, and the latter is homotopy equivalent to $B\Homeo(S_g^1,\calC)$; see for example \cite[Corollary B.3]{palmer-wu-Documenta}. As $S_g^1$ is a surface of finite type, there is an isomorphism \cite[Theorem 2.3]{CalegariChen2022}
\beq
H_1(B\Homeo(S_g^1,\calC))\cong H_1(B\Homeo(S_g^1)).
\eeq 
Combining this with the fact that $H_1(B\Homeo(S_g^1))=0$ if $g\geq 3$ (c.f. \cite[Lemma 1.1]{HarerSecond}), we obtain $H_1(B\Diff_\ast(X_g))=0$ if $g\geq 3$. Thus $H^2(B\Diff_\ast(X_g))$ is a torsion-free group by the universal coefficient theorem.
Now observe that Theorem \ref{prop:nontrivialXg} gives that $d^\ast:\Z\cong H^2(B\SO(2))\to H^2(B\Diff_\ast(X_g))$ is nontrivial and hence injective.
\end{proof}
\subsection{Surfaces of infinite genus (revisited)}
It may be of independent interest to know that the criterion given by Lemma \ref{lem:nontrivial-euler} can also be applied to  surfaces of infinite genus under the additional assumption that they do not have nonisolated planar ends. 
\begin{prop}\label{prop:avg-fixed-points}
Let $S$ be an orientable surface of infinite genus without nonisolated planar ends, and with one marked point.  Then $\Diff_\ast(S)$ contains finite cyclic subgroups of order $m$, for all $m\geq 1$.
\end{prop}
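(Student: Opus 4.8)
The plan is to realize, for each $m \geq 1$, a cyclic group $C_m$ acting on $S$ by orientation-preserving diffeomorphisms with a fixed point, and then invoke Lemma \ref{lem:nontrivial-euler} (together with Corollary \ref{coro:criterion-allorders}). The key geometric idea is the same as in the proof of Theorem \ref{prop:binary tree surfaces} and Theorem \ref{prop:nontrivialXg}: build a model of $S$ with a manifest $\Z/m$-symmetry by an ``$m$-fold unrolling'' construction near a chosen point. Concretely, first I would fix a point $\ast$ and a small disk $D$ around it, and decompose $S \setminus \interior(D)$ into a finite-genus piece together with finitely many complementary pieces carrying the remaining genus and ends; by the classification of surfaces (Theorem \ref{thm:richards}), I have a lot of freedom in how to distribute genus and ends across these pieces.

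The main step is as follows. Take $m$ disjoint copies $P_1,\dots,P_m$ of a ``wedge'' surface $P$ — a surface of infinite genus with boundary a single circle, chosen so that its end space realizes the appropriate end data — arranged cyclically around $\ast$, glued to a central $m$-punctured sphere (or more precisely a sphere with $m+1$ boundary circles, one of which bounds $D$), so that the rotation by $2\pi/m$ permuting the $P_i$ cyclically and fixing $\ast$ is a smooth orientation-preserving diffeomorphism of order $m$. Call the resulting surface $S_m$. This $S_m$ has infinite genus, a fixed point of the $C_m$-action, and — this is the crucial point — by choosing the end space of $P$ appropriately, $\operatorname{Ends}(S_m)$ can be made homeomorphic to $\operatorname{Ends}(S)$ as a pair $(\operatorname{Ends}, \operatorname{Ends}_{\mathrm{np}})$. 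Here the hypothesis that $S$ has no nonisolated planar ends is used: the nonplanar ends are dense in $\operatorname{Ends}_{\mathrm{np}}(S)$ and the planar ends form a discrete (hence countable, at most... ) subset, so $\operatorname{Ends}(S)$ admits a self-homeomorphism permuting $m$ clopen pieces whose union is all of $\operatorname{Ends}(S)$ minus possibly finitely many isolated planar ends near $\ast$ — this ``$m$-divisibility up to a finite discrepancy'' of the end space is exactly what lets the central piece absorb the discrepancy while the $P_i$ are genuine copies of one another. Then Theorem \ref{thm:richards} gives $S_m \cong S$, hence $\Diff_\ast(S) \cong \Diff_\ast(S_m)$ contains $C_m$.

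The hard part will be the bookkeeping on the end space: one must verify that whenever $S$ has infinite genus and no nonisolated planar ends, the pair $(\operatorname{Ends}(S),\operatorname{Ends}_{\mathrm{np}}(S))$ can be written (after deleting a clopen neighborhood of finitely many isolated points) as a disjoint union of $m$ mutually homeomorphic clopen sub-pairs. The key fact is that a nonempty compact, totally disconnected, metrizable space with no isolated points is homeomorphic to the Cantor set (Brouwer), and more generally such spaces are ``$m$-divisible''; the isolated planar ends are handled separately because they are isolated (so there are only countably many, accumulating only onto nonplanar ends), and one can push all of them — or all but finitely many — into a single one of the $m$ pieces or into the central region, using a self-homeomorphism of $\operatorname{Ends}(S)$. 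This is where I expect to spend the most care, possibly splitting into cases according to whether $\operatorname{Ends}_{\mathrm{np}}(S)$ is a singleton, finite, or a Cantor set, and whether there are finitely or infinitely many planar ends. Once the model $S_m$ is constructed and identified with $S$, the conclusion — that $\Diff_\ast(S)$ contains $C_m$ for every $m$, hence by Lemma \ref{lem:nontrivial-euler} and Corollary \ref{coro:criterion-allorders} that every power of the Euler class has infinite order — is immediate.
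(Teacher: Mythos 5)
There is a genuine gap, and it lies exactly where you predicted you would have to work hardest: the claimed ``$m$-divisibility up to a finite discrepancy'' of the end space is false under the hypotheses of the proposition. The hypothesis only constrains the \emph{planar} ends (they are isolated); it says nothing about the structure of $\operatorname{Ends}_{\mathrm{np}}(S)$. In your model $S_m$ the central sphere-with-holes is compact (or compact with finitely many punctures), so it carries no genus; hence each wedge $P_i$ must have infinite genus, hence at least one nonplanar end, and therefore $\operatorname{Ends}_{\mathrm{np}}(S_m)$ is a disjoint union of $m$ mutually homeomorphic nonempty clopen pieces. This already fails for the simplest surface covered by the proposition: the Loch Ness monster (one nonplanar end, no planar ends at all), whose one-point end space cannot be split into $m\geq 2$ nonempty clopen pieces, so $S_m\not\cong S$ for any $m\geq 2$. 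The same problem occurs for a single nonplanar end with a sequence of punctures converging to it (the end space $\omega+1$ has only one infinite clopen piece in any clopen partition), for Jacob's ladder when $m=3$, and generally whenever $|\operatorname{Ends}_{\mathrm{np}}(S)|$ is finite and not divisible by $m$. Note also that ``pushing all the isolated planar ends into a single one of the $m$ pieces'' is not available to you: the rotation permutes the $P_i$ cyclically, so they must carry identical end data, and the invariant central piece can only absorb a number of punctures congruent to $0$ or $1$ modulo $m$. Your construction is essentially the one used in the paper for the binary tree surfaces (Theorem \ref{prop:binary tree surfaces}), where the end space really is self-similar; it does not extend to general infinite-genus surfaces.

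The paper's proof takes a different route that avoids building a globally symmetric decomposition by hand. It first writes $S\cong Y\setminus\{p_1,p_2,\ldots\}$ with $Y$ of infinite genus and \emph{no} planar ends (this is where the hypothesis on planar ends is used), then invokes the theorem of Aougab--Patel--Vlamis that any finite group $G$ acts effectively by isometries of some complete hyperbolic metric on $Y$. Such an action need not have fixed points, so one performs an equivariant connected sum along a free orbit with a sphere rotated about an axis; this creates fixed points (the poles), and since $Y$ already has infinite genus and no planar ends it does not change the homeomorphism type. Repeating at further free orbits produces enough fixed points to realize the punctures $p_i$ as fixed points, yielding a $G$-action on $S$ with a fixed point. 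If you want to salvage your approach, you would need either to allow the ``central'' invariant piece to itself be a noncompact infinite-genus surface with a $G$-action and a fixed point (which is circular without an input like Aougab--Patel--Vlamis), or to restrict to surfaces whose end pair genuinely decomposes equivariantly, which is a strictly smaller class than the proposition covers.
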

\begin{proof}
As $S$ has no nonisolated planar ends, the classification of surfaces implies that $S$ is homeomorphic to $Y\setminus\{p_1,p_2,\ldots\}$, where $Y$ is an infinite-genus surface with no planar ends and $\{p_i\}\subset Y$ is a discrete subset of isolated points.

Let $G$ be a finite cyclic group. By \cite[Corollary 1.2]{AugabPatelVlamis2021} and \cite[Theorem 2.8]{epstein-periodic}, the group $G$ acts effectively and by isometries of a complete hyperbolic metric on $Y$. 

To guarantee a fixed point, we take $p\in Y$ with a free orbit which consists of $|G|$ points. Let $G$ act on $S^2$ by rotations around some axis, and take also a point with a free orbit.

Now use these free orbits to perform an equivariant connected sum, which yields a new $G$-surface $S$ obtained from $Y$ by attaching $|G|$-handles. Note that the axis of rotation determines a fixed point of the $G$-action on $S$. Furthermore, this construction does not change the space of ends of $Y$, so it is homeomorphic to $S$.

Finally, observe that repeating this construction at other free orbits of the $G$-action on $Y$, we obtain as many fixed points as needed. Thus we can realize $p_1,p_2,\dots$ as fixed points of a $G$-action on $Y$, and therefore we have an action of $G$ on $Y\setminus\{p_1,p_2,\ldots\}\cong S$  with a fixed point, which is what we wanted to show.
\end{proof}
\section{Morita's non-lifting theorem for infinite type surfaces}\label{sec:non-realization}
In this section we prove Theorem \ref{thm:Nielsen}, which we restate here as follows.
\begin{thm}[Morita’s non-lifting theorem for infinite-type surfaces]\label{thm:nonrealizability} 
Let $S$ be an oriented smooth infinite-type surface either of (possibly infinite) genus  $g\geq 13$ or diffeomorphic to the binary tree surface $\mathrm{Gr}_{\mathfrak{B}}(\Sigma)$ associated to an oriented smooth surface $\Sigma$. Then the extension
\begin{equation*}
    1\rightarrow \Diff^{\id}_*(S)\rightarrow \Diff_*(S)\xto{\pi} \Map_*(S)\rightarrow 1
\end{equation*}
does not split. In fact,  no finite index subgroup of $\Map_*(S)$ 
lifts to $\Diff_*(S)$.
\end{thm}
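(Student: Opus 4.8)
The strategy is to assume a lift exists and contradict it by combining the vanishing of the square of the Euler class of a flat plane bundle with the non-triviality of $e^2$ on $B\Diff_\ast(S)$ coming from Theorems~\ref{thm:pure-infinite}, \ref{thm:pure-finite} and \ref{prop:binary tree surfaces}. So suppose that some finite-index subgroup $\Gamma\leq\Map_\ast(S)$, of index $m$, admits a homomorphism $s\colon\Gamma\to\Diff_\ast(S)$ with $\pi\circ s$ the inclusion. Composing $s$ with the derivative homomorphism $\Diff_\ast(S)\to\GL_2^+(\R)$ at the marked point produces $d\circ s\colon\Gamma\to\GL_2^+(\R)$ and hence an oriented rank-$2$ bundle over $B\Gamma$ whose classifying map factors through $B\GL_2^+(\R)^\delta$; thus this bundle is \emph{flat}. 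Using the description $e=d^\ast E$ from Section~\ref{sec:vertical-euler} and the identification $B\Diff_\ast(S)\simeq B\Map_\ast^\delta(S)$ from Section~\ref{sec:cohomlogy-bigmcg} (under which pulling back along $Bs$ agrees with restriction to $\Gamma$), the Euler class of this flat bundle is precisely $e|_\Gamma\in H^2(\Gamma;\Z)$.

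Now for any oriented real plane bundle one has $e^2=p_1$, and the first rational Pontryagin class of a flat bundle vanishes (Chern--Weil with a flat connection, equivalently $p_1$ maps to $0$ under $H^4(B\GL_2^+(\R);\Q)\to H^4(B\GL_2^+(\R)^\delta;\Q)$). Hence $e^2|_\Gamma=0$ in $H^4(\Gamma;\Q)$, and likewise $e^n|_\Gamma=0$ for all $n\geq 2$. On the other hand $H^\ast(B\Diff_\ast(S);\Q)\cong H^\ast(\Map_\ast^\delta(S);\Q)$, and restriction to the finite-index subgroup $\Gamma$ is injective, being split by $\tfrac{1}{m}$ times the transfer. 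Therefore it is enough to show that $e^2\neq 0$ in $H^4(B\Diff_\ast(S);\Q)$. When $S$ has infinite genus this is Theorem~\ref{thm:pure-infinite} with $k=1$; when $S=\mathrm{Gr}_{\mathfrak{B}}(\Sigma)$ is a binary tree surface it is Theorem~\ref{prop:binary tree surfaces}; and when $S=Y_g$ has finite genus $g\geq 13$, Proposition~\ref{prop:euler-freudenthal} identifies $e$ with $\kappa^\ast$ of the Euler class on $B\Diff_\ast(S_g)$, where $e^2$ is rationally non-trivial by Morita's computation \cite[Theorem 7.5]{morita} together with the stable range \cite[Theorems 1 \& 2]{boldsen} (one may also invoke Theorem~\ref{thm:pure-finite} directly). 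In each case $e^2|_\Gamma\neq 0$, contradicting the previous paragraph; hence no finite-index subgroup of $\Map_\ast(S)$ lifts, and taking $\Gamma=\Map_\ast(S)$ yields the non-splitting of the extension.

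I expect the delicate step to be the first one: checking that the bundle built from $d\circ s$ is genuinely flat and that its Euler class is the restriction $e|_\Gamma$ of the tautological class, which is exactly where the equivalent descriptions of $e$ in Section~\ref{sec:othereuler} and the factorization of the classifying map through $B\GL_2^+(\R)^\delta$ are used (this is, in substance, Morita's argument for the closed surface $S_g$, transplanted to $S$ via the Freudenthal compactification and, in the infinite-genus and binary-tree cases, replaced by Theorems~\ref{thm:pure-infinite} and \ref{prop:binary tree surfaces}; one could also phrase the obstruction through a Milnor--Wood inequality on a suitable surface subgroup). The numerical hypothesis $g\geq 13$ intervenes only in the middle paragraph, as the threshold past which the available algebraic-independence range forces $e^2\neq 0$ rationally on $B\Diff_\ast(S_g)$; sharpening that range would lower the bound.
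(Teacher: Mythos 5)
Your argument is correct, but it is not the paper's argument, and the difference is worth spelling out. The paper follows Morita: a lift of $\Map_\ast(S)$ to $\Diff_\ast(S)$ makes the universal surface bundle over $B\Map_\ast^\delta(S)$ a foliated bundle, and the Bott vanishing theorem for the codimension-two foliation (whose normal bundle is the vertical tangent bundle) only kills the Pontryagin ring in degrees above $4$, i.e.\ it gives $e^4=p_1^2=0$ rationally but says nothing about $p_1=e^2$ itself; this is why the paper must know that $e^4$ is rationally nontrivial, and why the finite-genus hypothesis is $g\geq 13$ (degree $8$ must lie in the range $\lfloor(2g-2)/3\rfloor$). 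You instead exploit the marked point: composing the lift with the derivative homomorphism $\Diff_\ast(S)\to\GL_2^+(\R)$ produces an honest linear representation of $\Gamma$, hence a genuinely flat plane bundle, for which already $p_1=e^2$ vanishes rationally; so you only need $e^2\neq 0$ rationally, which in the finite-genus case would in fact lower the threshold to $g\geq 7$ with the same stability input. You also treat the finite-index statement explicitly via the transfer, which the paper's written proof leaves implicit. Two caveats. First, since $B\Gamma$ is not a manifold, "Chern--Weil with a flat connection" must be invoked in its universal form, exactly as in your parenthesis: $p_1$ dies under $H^4(B\GL_2^+(\R);\Q)\to H^4(B\GL_2^+(\R)^\delta;\Q)$ (this is classical --- it is the starting point of the theory of secondary classes of flat bundles, see e.g.\ Chapter 2 of \cite{morita-libro} --- but it is the load-bearing step and deserves a citation rather than a parenthesis; note also that it is essential that you only kill $e^2=p_1$ and not $e$ itself, since the Euler class of a flat $\GL_2^+(\R)$-bundle need not vanish). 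Second, in the finite-genus case your first formulation is slightly off: Proposition \ref{prop:euler-freudenthal} expresses $e$ as a pullback along $B\kappa$ of a class on $B\Homeo_\ast(S_g)$, and nontriviality of $e^2$ upstairs does not by itself survive the pullback; the correct reference is the one you give as an alternative, Theorem \ref{thm:pure-finite}, whose proof routes the nontriviality through $B\PDiff^k_{\partial}(S_{g,1})$.
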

 Here $\Diff^{\id}_*(S)$ denotes the identity component of $\Diff_\ast(S)$. As pointed out by Morita \cite[Remark 8.2]{morita} the proof below does not work for homeomorphisms.
 \begin{proof}
By considering the identity maps $\Map_\ast^{\delta}(S)\to\Map_\ast(S)$ and $\Diff_\ast^{\delta}(S)\to\Diff_\ast(S)$ and their induced maps in classifying spaces and then cohomology, we obtain the following commutative diagram
\beq
\xymatrix{
H^*(B\Map_\ast(S))\ar[r]^{\pi^*}\ar[d]_{\cong}& H^*(B\Diff_\ast(S))\ar[d]^{B(\id)^*}\\
H^*(B\Map^{\delta}_\ast(S))\ar[r]^{\pi^*}& H^*(B\Diff^{\delta}_\ast(S))
}
\eeq
Suppose, for a contradiction, that the map $\pi:\Diff_\ast(S)\to\Map_\ast(S)$ has a right inverse. So the maps $\pi^*$ in the diagram are injective. Therefore $B(\id)^*$ is injective.  Under our hypothesis, the class $e^4$ is nontrivial rationally.  This follows from Proposition \ref{prop:MMM} when $S$ has finite genus $g\geq 13$, the case when $S$ has infinite genus is a consequence of Proposition \ref{prop:marked-points}, and  Theorem \ref{prop:binary tree surfaces}  implies the case when $S=\mathrm{Gr}_{\mathfrak{B}}(\Sigma)$.  On the other hand, the Bott vanishing theorem \cite{bott-vanishing}, \cite[Theorem 3.24]{morita-libro} can be used in the same way as in \cite[Theorem 8.1]{morita} to show that $e^4\in\ker(B(\id)^*)$. The result follows from this contradiction.
 \end{proof}
\begin{rmk} 
When $S$ is a closed oriented surface of genus $g \geq 6$ or an oriented surface of genus $g\geq 2$ with $k\geq 1$ punctures, another proof of the non-lifting theorem is given in \cite[Theorem 1.2]{bestvina-church-souto}.  It would be interesting to know whether the conclusion of Theorem \ref{thm:nonrealizability} also holds  for the surfaces $X_g$ discussed above, when $1\leq g\leq 12$.
\end{rmk}
\begin{rmk} 
It is worth noting that the ``push'' subgroup $\pi_1(S,\ast)\leq\Map_\ast(S)$ lifts to $\Diff_\ast(S)$ for \textit{any} noncompact surface $S$. This follows from the fact that the homomorphism $\Diff_\ast(S)\to\Map_\ast(S)$ is onto and $\pi_1(S,\ast)$ is a free group. 
\end{rmk}
\appendix
\section{The Nielsen action}\label{sec:Nielsen-action}
We gave in Section \ref{sec:euler-action} an alternative definition of the Euler class via the Nielsen action of $\Map_\ast(S)$ on the circle $S^1$. In this appendix we summarize the aspects of Tappu’s work \cite{Tappu2023} about this action that are relevant in this article. We show that the action is faithful, continuous, and by orientation-preserving homeomorphisms of the circle. Finally, we observe that the existence of such a faithful action provides an obstruction to splitting the Birman exact sequence.

\subsection{Nielsen convex surfaces}
The action of $\Map_\ast(S)$ on orientation-preserving homeomorphisms of the circle is defined for \textit{Nielsen-convex} hyperbolic surfaces. These are complete hyperbolic surfaces which can be constructed by gluing hyperbolic pairs of pants (possibly with cusps) along their boundary components \cite[Theorem  4.5]{AlessandriniEtAl2011}.

It is worth noticing that every smooth surface $S$ without boundary with either negative Euler characteristic or of infinite type admits a complete hyperbolic metric which is Nielsen convex. To see this, take a topological pair of pants decomposition $\mathcal{P}$ of $S$. Replace each pair of pants in $\mathcal{P}$ with a hyperbolic pair of pants having cusps at its punctures, and require that whenever two pairs of pants share a boundary component, the corresponding boundary curves have the same length. Gluing the pieces of $\mathcal{P}$ according to the initial topological decomposition yields a hyperbolic surface homeomorphic to $S$, endowed with a hyperbolic pair of pants decomposition $\mathcal{P}$. As the boundary lengths of the elements of $\mathcal{P}$ can be chosen uniformly bounded above, then $S$ is complete by \cite[Lemma 4.7]{AlessandriniEtAl2011}. Moreover, since $S$ was obtained by gluing hyperbolic pairs of pants, it is Nielsen convex.

\subsection{The action of $\Map_\ast(S)$ on the circle}
Suppose that $S$ is a Nielsen-convex complete hyperbolic surface. Then $S$ is isometric to $\mathbb{H}^2/\Gamma$ where $\Gamma$ is a torsion-free Fuchsian group of the first kind \cite[Proposition 3.1]{Tappu2023}. Denote by $p:\mathbb{H}^2\rightarrow S$ the universal covering projection. Fix $\ast \in X$ and let $\tilde{\ast} \in \mathbb{H}^2$ be a point in $p^{-1}(\ast)$. We identify $\Gamma$ with the fundamental group $\pi_1(S,\ast)$.

A mapping class in $\Map_\ast(S)$ can be represented by a homeomorphism $f:S\rightarrow S$ which fixes $\ast \in S$. This homeomorphism lifts uniquely to a homeomorphism $\widetilde{f}:\mathbb{H}^2 \rightarrow \mathbb{H}^2$  which fixes the point $\tilde{\ast}$.
Moreover $f$ extends to a self-homeomorphism $\widetilde{f}$ of the geodesic boundary $\partial_\infty \mathbb{H}^2\cong S^1$ as we now explain. Let $f_\ast:\Gamma \rightarrow \Gamma$ be the automorphism of the deck group $\Gamma$ induced by $f$. It turns out that $f_\ast$ sends hyperbolic (resp. parabolic) elements to hyperbolic (resp. parabolic) elements \cite[Lemma 9]{Tappu2023}. If $\gamma\in\Gamma$ is a hyperbolic element, we denote by $\gamma^+\in S^1$ the \textit{sink} of $\gamma$. Let $\Gamma_\infty$ be the set of all sinks of hyperbolic elements of $\Gamma$. Since $S$ is complete and Nielsen-convex, the set $\Gamma_\infty$ is dense in $S^1$. The map $$\partial \widetilde{f}: \Gamma_\infty \rightarrow \Gamma_\infty$$ given by $q=\gamma^+\mapsto (f_\ast(\gamma))^+$ is well defined and bijective. Moreover, $\partial \widetilde{f}$ preserves the circular order on $S^1$ \cite[Lemma 12]{Tappu2023}. This fact and the density of $\Gamma_\infty$ in $S^1$ allows one to extend $\partial \widetilde{f}$ uniquely and continuously  over all of $S^1$. Furthermore, this extension is invariant under isotopies, that is, if $f^\prime$ is isotopic to $f$ (relative to $\ast$) then $\partial \widetilde{f^\prime}=\partial \widetilde{f}$.
This defines the map  
\begin{align*}
\rho: \Map_\ast(S) & \rightarrow  \mathrm{Homeo}^+(S^1) \\
[f] & \mapsto  \partial \widetilde{f}
\end{align*} 
where $\mathrm{Homeo}^+(S^1)$ denotes the group of orientation-preserving homeomorphisms of the circle $S^1$. We refer to $\rho$ as the \emph{Nielsen action} of $\Map_\ast(S)$.
\begin{thm}\label{thm:Tappu}
Let $S$ be a complete, Nielsen-convex hyperbolic surface of infinite type. Then the map $\rho$ is a continuous group homomorphism.
\end{thm}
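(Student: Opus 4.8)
The plan is to establish that $\rho$ is well-defined as a group homomorphism first, and then upgrade this to continuity. For the algebraic part, I would unwind the construction: given $[f]\in\Map_\ast(S)$, the induced automorphism $f_\ast\colon\Gamma\to\Gamma$ depends only on the isotopy class of $f$ rel $\ast$ (a lift fixing $\tilde\ast$ is unique, and isotopies rel $\ast$ lift to isotopies fixing $\tilde\ast$, so they induce the identity on $\pi_1$). The assignment $[f]\mapsto f_\ast$ is then a homomorphism $\Map_\ast(S)\to\Aut(\Gamma)$ by functoriality of the fundamental group. Next, since $f_\ast$ carries hyperbolic elements to hyperbolic elements (Tappu's Lemma 9) and conjugation by $f_\ast$ intertwines the dynamics on $\partial_\infty\Hyp$, the sink map $\gamma^+\mapsto(f_\ast\gamma)^+$ is a well-defined order-preserving bijection of the dense set $\Gamma_\infty$; its unique continuous order-preserving extension to $S^1$ is $\partial\widetilde f$. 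Because extension of order-preserving bijections of a dense subset is itself functorial (the composite of extensions extends the composite on $\Gamma_\infty$), and $\Gamma_\infty$ is dense, we get $\partial\widetilde{f\circ g}=\partial\widetilde f\circ\partial\widetilde g$, so $\rho$ is a homomorphism into $\Homeo^+(S^1)$; orientation-preservation is exactly preservation of circular order.

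For continuity I would use that $\Map_\ast(S)$ carries the quotient topology from $\Diff_\ast(S)$ (equivalently, Polish group topology), while $\Homeo^+(S^1)$ has the compact-open topology, generated by the sup metric. A basic neighbourhood of $\rho([f])$ is controlled by the values of $\partial\widetilde f$ on a finite set of points, and by density it suffices to control $\partial\widetilde f$ on finitely many sinks $\gamma_1^+,\dots,\gamma_m^+$. Each such value $(f_\ast\gamma_j)^+$ depends only on $f_\ast\gamma_j\in\Gamma$, i.e.\ on the image of finitely many fixed elements $\gamma_j\in\pi_1(S,\ast)$ under $f_\ast$. The key point is then that the map $\Map_\ast(S)\to\Aut(\Gamma)$, with $\Aut(\Gamma)$ given the topology of pointwise convergence on the discrete group $\Gamma$, is continuous: two mapping classes that agree on a large compact subsurface containing loops representing $\gamma_1,\dots,\gamma_m$ induce the same value on these generators. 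Continuity of $\Diff_\ast(S)\to\Map_\ast(S)\to\Aut(\Gamma)$ follows because $C^\infty$-close diffeomorphisms are isotopic on any fixed compact region, hence induce the same automorphism on the (finitely many) relevant classes; this factors through $\pi_0$. Finally, the passage $\gamma\mapsto\gamma^+$ from (the hyperbolic part of) $\Gamma$ to $S^1$ is continuous for the discrete topology on $\Gamma$, so composing gives continuity of $[f]\mapsto\bigl(\partial\widetilde f(\gamma_1^+),\dots,\partial\widetilde f(\gamma_m^+)\bigr)$, and hence of $\rho$.

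The main obstacle I anticipate is the continuity argument, specifically controlling the \emph{uniform} behaviour of $\partial\widetilde f$ over all of $S^1$ from control at finitely many sinks. The subtlety is that an order-preserving bijection of $S^1$ can be wild in the gaps between chosen points, so one must choose the finite set of sinks to be $\varepsilon$-dense in $S^1$ and use that $\partial\widetilde f$, being monotone, cannot move any point by more than roughly the gap size plus the displacement of the neighbouring chosen points. Making this precise requires knowing that a fixed finite $\varepsilon$-net of sinks can be represented by a fixed compact subsurface of $S$, so that the $\pi_0$-topology (agreement on compacta) indeed controls all $m$ generators simultaneously; here one leans on the description of the Polish topology on $\Map_\ast(S)$ via agreement on larger and larger compact pieces, and on the density statement $\overline{\Gamma_\infty}=S^1$ guaranteed by Nielsen-convexity and completeness. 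Everything else—functoriality of $f\mapsto f_\ast$, of the boundary extension, and of $\gamma\mapsto\gamma^+$—is routine once the setup from Tappu's lemmas cited above is in place.
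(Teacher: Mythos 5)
Your outline is essentially correct, but it is a genuinely different route from what the paper does: the paper offers no argument of its own and simply defers to Tappu, citing his Proposition 4.1 for the homomorphism property and remarking that his proof of Lemma 6.5 yields continuity, whereas you give a self-contained proof. Your homomorphism part retraces the same Nielsen-style construction the appendix summarizes (well-definedness and order-preservation of $\gamma^+\mapsto(f_\ast\gamma)^+$ on the dense set $\Gamma_\infty$, then uniqueness of monotone continuous extensions gives functoriality), so there is nothing new there beyond writing out what the citation hides. The added value is your continuity argument: factor through $\Diff_\ast(S)\to\Map_\ast(S)$ (an open quotient, so it suffices to check continuity upstairs), observe that on a $C^0$-neighborhood over the compact union of based loops representing $\gamma_1,\dots,\gamma_m$ the induced automorphism agrees with $f_\ast$ on these elements exactly (so $\partial\widetilde g$ and $\partial\widetilde f$ coincide at the corresponding sinks), and then convert finite exact agreement into uniform closeness by monotonicity, choosing the sinks so that their \emph{images} under $\partial\widetilde f$ form an $\varepsilon$-net (possible since $\partial\widetilde f(\Gamma_\infty)=\Gamma_\infty$ is dense). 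This is a clean, elementary argument that makes the continuity claim independent of the internals of Tappu's Lemma 6.5. Two small points to tighten: the phrase ``$C^\infty$-close diffeomorphisms are isotopic on any fixed compact region'' is both stronger than needed and not quite right as stated; all you need is that if $g(\ast)=f(\ast)=\ast$ and $g$ is $C^0$-close to $f$ on the loops (closeness below the injectivity radius along their images), then the geodesic straight-line homotopy is a based homotopy, so $g_\ast\gamma_j=f_\ast\gamma_j$. And in the $\varepsilon$-net step, say explicitly that both $\partial\widetilde f$ and $\partial\widetilde g$ map each arc between consecutive net points into the arc between their common images, which bounds the sup-distance by the maximal image gap; with that, the estimate you sketch is complete.
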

A proof of this result is given in \cite[Proposition 4.1]{Tappu2023}. Although the author does not explicitly state the continuity of the action, his proof of \cite[Lemma 6.5]{Tappu2023} shows that this map is continuous.

As in the case of compact surfaces, the Nielsen action is also faithful for infinite type surfaces. 
\begin{prop}\label{prop:faithful}
    Let $S$ be a complete, Nielsen-convex hyperbolic surface of infinite type. Then the group homomorphism $\rho$ is injective.
\end{prop}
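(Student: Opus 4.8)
The plan is to show that if $[f]\in\Map_\ast(S)$ acts trivially on $S^1=\partial_\infty\Hyp$, then $[f]$ is the identity mapping class. First I would fix a representative homeomorphism $f:S\to S$ fixing $\ast$, lift it to the unique $\widetilde{f}:\Hyp\to\Hyp$ fixing $\widetilde{\ast}$, and recall that the boundary extension $\partial\widetilde{f}:S^1\to S^1$ is determined on the dense set $\Gamma_\infty$ of sinks of hyperbolic elements by $\gamma^+\mapsto (f_\ast\gamma)^+$, where $f_\ast\colon\Gamma\to\Gamma$ is the automorphism of the deck group induced by $f$. The hypothesis $\partial\widetilde{f}=\id_{S^1}$ therefore forces $(f_\ast\gamma)^+=\gamma^+$ for every hyperbolic $\gamma\in\Gamma$, and by the same reasoning applied to $\gamma^{-1}$ (whose sink is the source of $\gamma$) also $(f_\ast\gamma)^-=\gamma^-$. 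Thus $f_\ast\gamma$ and $\gamma$ are hyperbolic isometries of $\Hyp$ sharing the same axis and the same attracting/repelling pair of endpoints.

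The key algebraic step is then to deduce that $f_\ast$ is the identity automorphism of $\Gamma$. Two hyperbolic elements of a torsion-free Fuchsian group with the same axis lie in a common cyclic subgroup (the stabilizer of that axis), so $f_\ast\gamma=\gamma^{n(\gamma)}$ for some integer $n(\gamma)$; I would use that $f_\ast$ is an automorphism together with the fact that it preserves the translation length only up to the constraint that it maps primitive elements to primitive elements to conclude $n(\gamma)=\pm1$, and then that preserving the \emph{attracting} endpoint (not just the axis) rules out $-1$, giving $f_\ast\gamma=\gamma$ on all hyperbolic elements. Since a nonelementary Fuchsian group of the first kind is generated by its hyperbolic elements, $f_\ast=\id_\Gamma$. (Alternatively, one can invoke that $\partial\widetilde f$ determines $f_\ast$ completely via its action on endpoints, so $\partial\widetilde f=\id$ already yields $f_\ast=\id$.)

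Finally I would translate $f_\ast=\id$ back into a statement about mapping classes. The induced map on $\pi_1(S,\ast)\cong\Gamma$ being trivial means $f$ is homotopic rel $\ast$ to a map inducing the identity on $\pi_1$; for surfaces this is enough, via the theory of the mapping class group (e.g.\ the fact that $\Map_\ast(S)$ injects into $\Aut(\pi_1(S,\ast))$ for surfaces with boundary or punctures, or the Dehn--Nielsen--Baer theorem in the infinite-type setting as treated by Tappu and by Hern\'andez--Morales--Valdez), to conclude $[f]=1$ in $\Map_\ast(S)$. Concretely, because $S$ is noncompact, $S$ is homotopy equivalent to a wedge of circles, $\Map_\ast(S)$ acts faithfully on $\pi_1(S,\ast)$, and so $f_\ast=\id$ forces $[f]$ to be trivial.

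The main obstacle I anticipate is the passage from "$\partial\widetilde f=\id$'' to "$f_\ast=\id_\Gamma$'': one must be careful that knowing $\partial\widetilde f$ only on the \emph{axis endpoints} of elements of $\Gamma$ (rather than on a larger canonically defined set) still pins down $f_\ast$, and that the torsion-freeness and first-kind hypotheses are used correctly to exclude the inversion $\gamma\mapsto\gamma^{-1}$ and to guarantee that hyperbolic elements generate $\Gamma$. Once that is in hand, the remaining identification of the kernel of $\rho$ with the kernel of the action on $\pi_1$ — which is trivial for infinite-type surfaces — is standard.
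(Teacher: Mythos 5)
Your first half coincides with the paper's: from $\partial\widetilde f=\id$ you fix the attracting and repelling points of every hyperbolic $\gamma\in\Gamma$, and then cyclicity of the axis stabilizer in a torsion-free Fuchsian group plus preservation of primitivity (and of the \emph{attracting} endpoint, to exclude $\gamma\mapsto\gamma^{-1}$) gives $f_\ast(\gamma)=\gamma$ for primitive hyperbolic $\gamma$; this is exactly the paper's first step, and your worry about that passage is well placed but resolvable as you indicate. Where you diverge is the endgame. The paper stops at primitive hyperbolic elements: since isotopy classes of essential simple closed curves correspond to conjugacy classes of primitive hyperbolic elements, $f$ fixes every such isotopy class, and the Alexander method for infinite-type surfaces (Hern\'andez--Morales--Valdez) then gives $f\simeq\id$. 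You instead upgrade to $f_\ast=\id_\Gamma$ and conclude via asphericity plus faithfulness of $\Map_\ast(S)\to\Aut(\pi_1(S,\ast))$. This route is viable but carries two extra burdens the paper's route avoids. First, you must handle the parabolic elements of $\Gamma$: the cleanest fix is your parenthetical one, made precise via the equivariance $\partial\widetilde f\circ\gamma=f_\ast(\gamma)\circ\partial\widetilde f$ on $S^1$ (checked on the dense set $\Gamma_\infty$ and extended by continuity), since an isometry of $\Hyp$ is determined by its boundary action; your stated alternative, that hyperbolic elements generate $\Gamma$, is also true but itself needs an argument (e.g.\ writing each parabolic as a product of two hyperbolics), and your aside about ``translation length'' is off, as $f_\ast$ need not preserve it. Second, and more substantively, the faithfulness of $\Map_\ast(S)\to\Aut(\pi_1(S,\ast))$ for \emph{infinite-type} surfaces is precisely a basepointed ``homotopy implies isotopy''/Dehn--Nielsen--Baer-type rigidity statement, of the same depth as the Alexander-method theorem the paper cites; it is true and can be extracted from Epstein-type results together with that machinery, but it is the crux of your argument and must be cited or proved precisely, not waved at. In short: your proof is correct in outline and buys a slightly stronger intermediate conclusion ($f_\ast=\id_\Gamma$), while the paper's detour through simple closed curves lets it quote a single off-the-shelf rigidity theorem without ever touching the parabolics or the $\pi_1$-faithfulness question.
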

\begin{proof}
Let $f$ an orientation-preserving homeomorphism of $S$ that fixes $\ast\in S$ and represents an element in $\ker (\rho)$. Therefore $\partial \widetilde{f}=\id_{S^1}$, and  if $\gamma\in\Gamma$ is a hyperbolic deck transformation, then both the sink and source points of $\gamma$ are fixed by $\partial \widetilde{f}$. Since $f_\ast:\Gamma\to\Gamma$ preserves primitive elements, it follows that $f_\ast(\gamma)=\gamma$ for every primitive hyperbolic element $\gamma\in\Gamma$. 

Recall that isotopy classes of essential simple closed curves in $S$ correspond bijectively to conjugacy classes of primitive hyperbolic elements of $\Gamma$. Hence $f$ preserves the isotopy class of every simple closed curve in $S$. By the ``Alexander method'' for infinite-type surfaces \cite[Theorem 1.1]{Alexander-InfiniteSurfaces}, we conclude that $f$ is isotopic to the identity.
\end{proof}
\subsection{Non-splitting of the Birman exact sequence for infinite type surfaces}
Let $S$ be an orientable surface of negative Euler charateristic or of infinite type, and let $\ast\in S$ be a marked point. Then the forgetful homomorphism $\Homeo_{\ast}(S)\to \Homeo(S)$ induces the  \emph{Birman exact sequence} (see \cite[Corollaries 1.1 $\&$ 1.3]{Birman1969} and
\cite[Theorem A.2]{Domat2022}) \begin{equation}\label{eqBES}
        1\rightarrow \pi_1(S,\ast)\rightarrow \Map_{\ast}(S)\rightarrow \Map(S)\rightarrow 1.
    \end{equation}

 Since any torsion subgroup of $\mathrm{Homeo}^+(S^1)$ is cyclic, then from Proposition \ref{prop:faithful} any torsion subgroup of $\Map_*(S)$ must be cyclic.
\begin{cor}[Non-splitting of the Birman exact sequence]\label{cor:birman}
 Let $S$ be a complete, Nielsen-convex hyperbolic surface of infinite type. The Birman exact sequence (\ref{eqBES}) does not split whenever $\Map(S)$ contains a non-cyclic finite subgroup. 
\end{cor}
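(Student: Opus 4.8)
The plan is to argue by contradiction: suppose the Birman exact sequence \eqref{eqBES} splits, so that there is a section $s\colon \Map(S)\to\Map_\ast(S)$. Composing $s$ with the Nielsen action $\rho\colon \Map_\ast(S)\to\Homeo^+(S^1)$ yields a homomorphism $\rho\circ s\colon \Map(S)\to\Homeo^+(S^1)$. The key point is to restrict attention to a non-cyclic finite subgroup $G\leq \Map(S)$, which exists by hypothesis. Since the section $s$ is a splitting of a group extension, its restriction $s|_G\colon G\to\Map_\ast(S)$ is injective; in particular $s(G)$ is a finite subgroup of $\Map_\ast(S)$ isomorphic to $G$, hence non-cyclic.

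Next I would invoke the structure of finite subgroups of $\Homeo^+(S^1)$. It is classical that every finite subgroup of $\Homeo^+(S^1)$ is conjugate to a subgroup of $\SO(2)$, and in particular is cyclic (the paper already states ``any torsion subgroup of $\mathrm{Homeo}^+(S^1)$ is cyclic''). Therefore $\rho(s(G))$ is a finite cyclic subgroup of $\Homeo^+(S^1)$. On the other hand, by Proposition \ref{prop:faithful}, $\rho$ is injective, so $\rho$ restricted to the finite subgroup $s(G)$ is injective, and hence $\rho(s(G))\cong s(G)\cong G$. Combining these, $G$ is cyclic, contradicting the choice of $G$ as a non-cyclic finite subgroup. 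This contradiction shows that no such section $s$ can exist, i.e.\ the Birman exact sequence does not split.

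I expect the only subtlety — and it is a minor one — to be making sure the hypotheses line up so that the Nielsen action $\rho$ is available and faithful for the surface $S$ in question. This is exactly guaranteed by the standing assumption that $S$ is a complete, Nielsen-convex hyperbolic surface of infinite type, together with Theorem \ref{thm:Tappu} (which gives that $\rho$ is a well-defined continuous homomorphism) and Proposition \ref{prop:faithful} (injectivity of $\rho$). No continuity is actually needed here; only the injectivity of $\rho$ on torsion and the classification of finite subgroups of $\Homeo^+(S^1)$ as cyclic are used. There is no real ``hard part'': once the three ingredients — splitting forces an injective copy of $G$ inside $\Map_\ast(S)$, faithfulness of $\rho$, and cyclicity of finite subgroups of $\Homeo^+(S^1)$ — are in place, the contradiction is immediate. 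The statement is, in effect, a direct corollary of Proposition \ref{prop:faithful}, and the proof is two or three lines.
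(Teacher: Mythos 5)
Your argument is correct and is essentially the paper's own: the paper notes, right before the corollary, that since torsion subgroups of $\Homeo^+(S^1)$ are cyclic and $\rho$ is faithful (Proposition \ref{prop:faithful}), every finite subgroup of $\Map_\ast(S)$ is cyclic, so a splitting would inject the non-cyclic finite subgroup of $\Map(S)$ into $\Map_\ast(S)$, a contradiction. Your write-up just makes the same two-line contradiction explicit, so there is nothing to add.
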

When $g\geq 2$, the group $\Map(S_g)$ contains a finite non-cyclic subgroup, so one can realize it as a group of isometries of $S_g$, and then remove a Cantor set from a neighborhood of a free orbit. This produces surfaces $S = X_g$ for which the Birman exact sequence fails to split.

Another example arises from the fact that the mapping class group of an orientable surface $S$ of infinite genus with no planar ends contains every finite group \cite[Corollary 1.3]{AugabPatelVlamis2021}. In this case, the Birman exact sequence does not split. 
\bibliographystyle{amsalpha}
\bibliography{refs}

Mauricio Bustamante\\
Departamento de Matem\'aticas, Pontificia Universidad Cat\'olica de Chile\\
\texttt{mauricio.bustamante@uc.cl}

Rita Jim\'enez Rolland\\
{ Instituto de Matemáticas, Universidad Nacional Autónoma de México}\\
\texttt{rita@im.unam.mx}

Israel Morales\\
{Departamento de Matem\'atica y Estad\'istica, Universidad de la Frontera}\\
\texttt{israel.morales@ufrontera.cl}
\end{document}